\newtheorem{lemma}{Lemma}[section]
\newtheorem{theorem}[lemma]{Theorem}
\newtheorem{proposition}[lemma]{Proposition}
\newtheorem{corollary}[lemma]{Corollary}
\renewenvironment{proof}[1][\proofname]{{\noindent\bf #1. }}{\qed}
\newtheorem{theoremletters}{Theorem}
\newtheorem{conjectureletters}[theoremletters]{Conjecture}
\theoremstyle{definition}
\newtheorem{example}[lemma]{Example}
\newtheorem{remark}[lemma]{Remark}
\newcommand{\abs}[1]{\ensuremath{|#1|}}
\newcommand{\op}{\operatorname}
\newcommand{\ce}[2]{\pmb{\op{C}}_{#1}(#2)}
\newcommand{\ze}[1]{\pmb{\op{Z}}(#1)}
\newcommand{\rad}[2]{\pmb{\op{O}}_{#1}(#2)}
\newcommand{\syl}[2]{\op{Syl}_{#1}\hspace*{-0.7mm}\left(#2\right)}
\newcommand{\hall}[2]{\op{Hall}_{#1}\hspace*{-0.7mm}\left(#2\right)}
\newcommand{\conp}{\op{Con}(G_{p'})}
\begin{document}

\title{\bf Groups whose common divisor graph on $p$-regular classes has diameter three}

\author{\sc  M.J. Felipe $^{*}$ $\cdot$ M.K. Jean-Philippe $^{\diamond}$ $\cdot$ V. Sotomayor
\thanks{Instituto Universitario de Matemática Pura y Aplicada (IUMPA-UPV), Universitat Polit\`ecnica de Val\`encia, Camino de Vera s/n, 46022 Valencia, Spain. \newline \Letter: \texttt{mfelipe@mat.upv.es, vsotomayor@mat.upv.es}\newline 
ORCID: 0000-0002-6699-3135, 0000-0001-8649-5742  \newline
\indent $^{\diamond}$Departamento de Matemáticas, Instituto Tecnológico de Santo Domingo (INTEC) and Universidad Autónoma de Santo Domingo (UASD), Santo Domingo, Dominican Republic. \newline \Letter: \texttt{1097587@est.intec.edu.do; mjean46@uasd.edu.do}
\newline \rule{6cm}{0.1mm}\newline
The first and last authors are supported by Ayuda a Primeros Proyectos de Investigación (PAID-06-23) from Vice\-rrectorado de Investigación de la Universitat Politècnica de València (UPV), and by Proyecto CIAICO/2021/163 from Generalitat Valenciana (Spain). The results in this paper are part of the second author's Ph.D. thesis.\newline
}}

\date{}

\maketitle

\begin{abstract}
\noindent Let $G$ be a finite $p$-separable group, for some fixed prime $p$. Let $\Gamma_p(G)$ be the common divisor graph built on the set of non-central conjugacy classes of $p$-regular elements of $G$: this is the graph whose vertices are the conjugacy classes of those non-central elements of $G$ such that $p$ does not divide their orders, and two distinct vertices are adjacent if and only if the greatest common divisor of their lengths is strictly greater than one. The aim of this paper is twofold: to positively answer an open question concerning the maximum possible distance in $\Gamma_p(G)$ between a vertex with maximal cardinality and any other vertex, and to study the $p$-structure of $G$ when $\Gamma_p(G)$ has diameter three.

\medskip

\noindent \textbf{Keywords} Finite groups $\cdot$ Conjugacy classes $\cdot$ $p$-regular elements $\cdot$ Common divisor graph

\smallskip

\noindent \textbf{2020 MSC} 20E45 $\cdot$ 20D20 
\end{abstract}


\section{Introduction}

Only finite groups will be considered in this paper. The interplay between the algebraic structure of a group and the arithmetical features of the lengths of its conjugacy classes is a widely-investigated research area within the theory of finite groups. This general issue has attracted the interest of many authors, who have investigated several variations on the theme over the past few decades. For instance, the information provided by the lengths of conjugacy classes of $p$\emph{-regular elements} of a group $G$ (i.e. elements with order not divisible by $p$, for a fixed prime $p$) is deeply related to its local structure. Let us denote by $G_{p'}$ the set of $p$-regular elements of $G$, and by $\operatorname{Con}(G_{p'})=\{x^G \; : \; x\in G_{p'}\}$ the set of conjugacy classes of $p$-regular elements of $G$. A useful tool to capture the arithmetical properties associated with the sizes of these conjugacy classes is the \emph{common divisor graph} $\Gamma_p(G)$: this is the (simple undirected) graph whose vertices are the non-central classes of $\operatorname{Con}(G_{p'})$, and two distinct vertices $B$ and $C$ are adjacent whenever their cardinalities have a common prime divisor, i.e. whenever $\operatorname{gcd}(|B|, |C|)>1$. This graph was originally defined by A. Beltrán and M.J. Felipe in \cite{BF1} and, when $p$ does not divide the order of $G$, it generalizes the graph $\Gamma(G)$ formerly introduced by E.A. Bertram, M. Herzog and A. Mann in \cite{BHM} that considers all the non-central conjugacy classes of $G$. Two main questions that naturally arise in this setting are: what can be said about the ($p$-)structure of $G$ if some information on $\Gamma(G)$ ($\Gamma_p(G)$) is known, and which graphs can occur as $\Gamma(G)$ ($\Gamma_p(G)$) for some group $G$? We refer the interested reader to Section 5 of the excellent survey \cite{CC} due to A.R. Camina and R.D. Camina for an overview of results on this topic.

Regarding the above second question, it was proved in \cite{BHM} that $\Gamma(G)$ has at most two connected components, and its diameter is at most four in the connected case. Later on the upper bound on the diameter of $\Gamma(G)$ was improved to three, which is best possible (cf. \cite{CHM}). These same bounds also hold for the graph $\Gamma_p(G)$ when $G$ is $p$-separable, as it was shown in \cite{BF1}. Note that the graph $\Gamma_p(G)$ can be viewed as the subgraph of $\Gamma(G)$ induced by those vertices of $\Gamma(G)$ whose representatives are $p$-regular elements. Despite of this fact, the upper bounds for the number of connected components and the diameter (in the connected case) of $\Gamma_p(G)$ cannot be directly derived from those of $\Gamma(G)$; indeed, the loss of information about conjugacy classes of elements with order divisible by $p$ leads to the development of new techniques for the graph $\Gamma_p(G)$ and, even so, some situations are still not entirely understood. For instance, the structure of $G$ when $\Gamma(G)$ is non-connected was characterised in \cite{K} (and later in \cite{BHM}, see Theorem~\ref{disconnected} below), whilst the corresponding situation for the graph $\Gamma_p(G)$, in spite of the in-depth analysis carried out by Beltrán and Felipe in \cite{BF3} (see Theorem~\ref{disconnected-p-reg}), is a problem that still remains unsolved.

The previously mentioned upper bound for the diameter of $\Gamma_p(G)$, when it is connected, was mainly based on the fact that the maximum possible distance between a conjugacy class in $\conp$ of maximal length and any other class is at most 3 (cf. \cite[Proposition 1]{BF1}); nonetheless, it was an open question whether this bound was sharp. Our first main result demonstrates that this maximum possible distance is actually 2, which also happens in the ordinary graph $\Gamma(G)$ (cf. \cite[Corollary 1]{BF1}). As a consequence, it can be reproved in an alternative way that the diameter of $\Gamma_p(G)$ is at most 3 (see Corollary~\ref{cor_diam}). Let us denote by $d(B,C)$ the distance between two non-central classes $B$ and $C$.

\begin{theoremletters}
\label{teoA}
Let $G$ be a $p$-separable group, for a given prime $p$, and suppose that $\Gamma_p(G)$ is connected. If $B_0$ is a non-central conjugacy class in $\operatorname{Con}(G_{p'})$ of maximal length, then $$d(B_0, D)\leq 2,$$ for every non-central conjugacy class $D\in \operatorname{Con}(G_{p'})$.
\end{theoremletters}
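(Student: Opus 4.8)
The plan is to prove the stronger statement that $B_0$ lies at distance at most $2$ from every other vertex by exhibiting, whenever $B_0$ and $D$ are not already adjacent, a common neighbour. Write $B_0=a^G$ with $a\in G_{p'}$ non-central of maximal class length, and $D=d^G$ with $d\in G_{p'}$ non-central. If $B_0=D$ or $\gcd(|B_0|,|D|)>1$ there is nothing to prove, so assume $\gcd(|B_0|,|D|)=1$ and fix primes $r\mid |B_0|$ and $s\mid |D|$; necessarily $r\neq s$.

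First I would pass to commuting representatives. Since $|B_0|=|G:\ce{G}{a}|$ and $|D|=|G:\ce{G}{d}|$ are coprime, $\ce{G}{a}\,\ce{G}{d}=G$, and it is well known that, after replacing $d$ by a suitable conjugate, $[a,d]=1$. Then $c:=ad=da$ lies in $G_{p'}$ (a product of commuting $p$-regular elements), and it is non-central: if $c$ were central then, as $a$ and $d$ commute, $\ce{G}{c}=G$ would force $\ce{G}{a}=\ce{G}{d}$ and hence $|B_0|=|D|$, against coprimality and non-centrality. Moreover $\ce{G}{a}\cap \ce{G}{d}\le \ce{G}{c}$, and since $\ce{G}{a}\,\ce{G}{d}=G$ one computes $|G:\ce{G}{a}\cap \ce{G}{d}|=|B_0|\,|D|$; hence $|c^G|$ divides $|B_0|\,|D|$ and, by maximality of $|B_0|$, also $|c^G|\le |B_0|$.

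The heart of the argument is to show that $|c^G|$ is divisible both by a prime dividing $|B_0|$ and by a prime dividing $|D|$, so that $c^G$ is adjacent to each of them. Using that $a$ and $d$ commute one has $\ce{G}{c}\cap \ce{G}{a}=\ce{G}{a}\cap \ce{G}{d}=\ce{G}{c}\cap \ce{G}{d}$, from which $|\ce{G}{a}:\ce{G}{a}\cap \ce{G}{c}|=|D|$ and $|\ce{G}{d}:\ce{G}{d}\cap \ce{G}{c}|=|B_0|$. These identities show that $c^G$ cannot be simultaneously coprime to $|B_0|$ and to $|D|$ (that would give $|c^G|=|D|=|B_0|$, impossible), and that if it were coprime to just one of them its length would equal $|B_0|$ or $|D|$. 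I would rule out these degenerate one-sided cases by exploiting the maximality of $|B_0|$ together with $p$-separability: when $o(a)$ and $o(d)$ are coprime the containment $\ce{G}{a}\cap \ce{G}{d}\le \ce{G}{c}$ is an equality, forcing $|c^G|=|B_0|\,|D|>|B_0|$ and an immediate contradiction, so the collapse can only be caused by primes shared by $o(a)$ and $o(d)$; replacing $a$ and $d$ by appropriately chosen prime-power (Hall) components — whose existence and $p$-regularity are guaranteed by $p$-separability — I would arrange that the primes surviving in $|c^G|$ come from both sides.

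The main obstacle is precisely this control of $\ce{G}{c}$ relative to $\ce{G}{a}\cap \ce{G}{d}$: when the orders of $a$ and $d$ share a prime, $\ce{G}{c}$ may be strictly larger than $\ce{G}{a}\cap \ce{G}{d}$ and the length $|c^G|$ can collapse onto a single side, so the construction of the common neighbour must interleave the choice of prime-power components with the maximality of $|B_0|$ to prevent both the loss of the prime $r$ and the loss of the prime $s$. An equivalent route, which isolates the same difficulty, is to invoke \cite[Proposition 1]{BF1} to reduce to excluding $d(B_0,D)=3$: a geodesic $B_0-B_1-B_2-D$ forces $\gcd(|B_0|,|B_2|)=1$ and $\gcd(|B_0|,|D|)=1$, and feeding these coprimalities into the commuting-product construction above would yield a non-central $p$-regular class of length exceeding $|B_0|$, contradicting its maximality.
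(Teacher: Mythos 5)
There is a genuine gap at the very first step, and it is fatal to the whole construction. You assert that, because $|B_0|=|G:\ce{G}{a}|$ and $|D|=|G:\ce{G}{d}|$ are coprime (hence $G=\ce{G}{a}\ce{G}{d}$), it is ``well known'' that some conjugate of $d$ commutes with $a$. This is false: in $G=S_3$ the classes of $(12)$ and $(123)$ have coprime sizes $3$ and $2$, yet $\ce{G}{(12)}=\{1,(12)\}$ contains no conjugate of $(123)$, so no conjugate of $(123)$ commutes with $(12)$. What coprimality of class sizes actually gives is only that the product $a^G d^G$ is a single conjugacy class $(ad)^G$ whose size divides $|B_0|\,|D|$ (Lemma~\ref{technical}~(a)); it does not provide commuting representatives. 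Everything downstream --- the containment $\ce{G}{a}\cap\ce{G}{d}\leqslant\ce{G}{c}$, the index identities $|\ce{G}{a}:\ce{G}{a}\cap\ce{G}{c}|=|D|$ and $|\ce{G}{d}:\ce{G}{d}\cap\ce{G}{c}|=|B_0|$, and the case analysis built on them --- depends on $[a,d]=1$ and therefore does not get off the ground.

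Even setting that aside, the case you correctly identify as the heart of the matter (when $|c^G|$ collapses onto one side) is not actually argued: you say you ``would arrange'' via prime-power components that primes from both sides survive in $|c^G|$, but no mechanism is given, and there is no reason a purely local computation with $\ce{G}{a}\cap\ce{G}{d}$ can force this. The paper's proof of exactly this point is where all the work lies: it introduces the abelian normal $p'$-subgroup $S$ generated by the classes at distance at least $2$ from $B_0$ (Proposition~\ref{prop}), invokes Theorem~\ref{clave} (\cite[Corollary C]{FMS}) to produce abelian Hall subgroups and control which class sizes can be $\pi$- or $\pi'$-numbers, and applies Kazarin's theorem (Theorem~\ref{disconnected}) to $\rad{p'}{G}$ to pin down $|b^N|$ and $|y^N|$ before reaching a contradiction through the factorisation $H=\ce{H}{b_{\pi_y}}S$. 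None of this global structural input appears in your sketch, and the distance-$3$ reduction in your last paragraph inherits the same two problems: the ``commuting-product construction'' it feeds into is the one that fails at step one, and the promised class of length exceeding $|B_0|$ is never produced.
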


We highlight that a key ingredient in the proof of the above result is \cite[Corollary C]{FMS} (see Theorem~\ref{clave} below), which particularly asserts that all elements in $\operatorname{Con}(G_{p'})$ cannot have lengths either $p$-numbers or $p'$-numbers, simultaneously.

Next we put focus on the aforesaid question concerning how is the $p$-structure of $G$ influenced by the theoretical structure of the graph $\Gamma_p(G)$. We have previously mentioned that the diameter of $\Gamma_p(G)$ is at most three, so it naturally arises the study of the $p$-structure of groups $G$ such that the diameter of $\Gamma_p(G)$ is exactly three. Precisely, our second main result partially addresses it.

Let us denote by $\pi(g^G)$ the set of prime divisors of $|g^G|=|G:\ce{G}{g}|$. Recall that $G$ is called \emph{quasi-Frobenius} if $G/\ze{G}$ is Frobenius, and the inverse images in $G$ of the kernel and a complement of $G/\ze{G}$ are then called the kernel and a complement of $G$.

\begin{theoremletters}
\label{teoB}
Let $G$ be a $p$-separable group, for a given prime $p$. Suppose that $d(x^G, y^G)=3$, where $x^G, y^G\in\conp$ are non-central. Set $\pi_x=\pi(x^G)$, $\pi_y=\pi(y^G)$ and $\pi=\pi_x\cup\pi_y$. If $p$ divides $\operatorname{min}(|x^G|,|y^G|)$, then $G$ is $p$-nilpotent, and every $p$-complement $H$ of $G$ verifies that $H=\rad{\pi}{H} \times \rad{\pi'}{H}$ where $\rad{\pi}{H}$ is a quasi-Frobenius group with abelian kernel and complements. Moreover, each complement of $H$ is centralised by some Sylow $p$-subgroup of $G$, and $\ze{H}=H\cap\ze{G}$.
\end{theoremletters}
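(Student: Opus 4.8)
The plan is to extract the combinatorial content of $d(x^G,y^G)=3$, then to establish that $G$ is $p$-nilpotent (the main hurdle), and finally to read off the structure of a $p$-complement by reducing the remaining structural work to the disconnected common divisor graph treated in Theorem~\ref{disconnected}. First I would record the elementary reductions. As $d(x^G,y^G)=3$, the vertices $x^G$ and $y^G$ are non-adjacent, so $\gcd(|x^G|,|y^G|)=1$; hence $\pi_x\cap\pi_y=\emptyset$ and $\pi$ is the disjoint union $\pi_x\cup\pi_y$. Since $p$ divides $\min(|x^G|,|y^G|)$ and these lengths are coprime, $p$ lies in exactly one of the two sets, and relabelling I may assume $p\in\pi_x$ and $p\notin\pi_y$. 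The decisive consequence of the distance being three is the following no-mixing property: no non-central $z^G\in\conp$ can have $\pi(z^G)$ meeting both $\pi_x$ and $\pi_y$, for otherwise $x^G-z^G-y^G$ would be a path of length two. In particular, every $p$-regular class of $p$-divisible length avoids all primes of $\pi_y$, and every neighbour of $y^G$ has $p'$-length. Finally, since $x$ and $y$ are $p$-regular I may take $x,y\in H$ for a fixed $p$-complement $H$ once it exists.

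The crux, and the step I expect to be the main obstacle, is to prove that $G$ is $p$-nilpotent, i.e. that $G/\rad{p'}{G}$ is a $p$-group. Here I would rely on the key input Theorem~\ref{clave}, which forbids the lengths in $\conp$ from being simultaneously $p$-numbers or $p'$-numbers and so prevents the prime $p$ from being arithmetically invisible. Arguing in a minimal counterexample and using $p$-separability, I would exploit the asymmetry $p\in\pi_x$, $p\notin\pi_y$: the neighbours of $y^G$ carry $p'$-length, so a suitable Sylow $p$-subgroup centralises the corresponding $p$-regular elements, whereas the factor $p$ occurring in the $\pi_x$-side lengths must be produced entirely by the action of $P\in\syl{p}{G}$. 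Converting this into centraliser containments and feeding in the no-mixing property, the goal is to show that $P$ accounts for the whole non-$p'$ part of $G$, so that $G=\rad{p'}{G}\rtimes P$; the genuinely delicate point is to exclude that $G/\rad{p'}{G}$ is strictly larger than a $p$-group, which is where Theorem~\ref{clave}, and if necessary Theorem~\ref{teoA} (a class of maximal length lies at distance at most two from every vertex, hence cannot be $x^G$ or $y^G$), should intervene.

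Granting $p$-nilpotency, write $G=H\rtimes P$ with $H=\rad{p'}{G}$. For a $p$-regular $h\in H$ the class length factors as $|h^G|=|H:\ce{H}{h}|\cdot|P:\ce{P}{h}|$, where the first factor equals $|h^H|$ and is the $p'$-part and the second is the $p$-part. This transports the no-mixing property to $H$ and shows that, on the Hall $\pi$-subgroup $\rad{\pi}{H}$ (a $p'$-group, so that $\Gamma_p$ and the ordinary graph coincide there), the common divisor graph splits with no edges between its $\pi_x\setminus\{p\}$ part and its $\pi_y$ part; that is, the graph of $\rad{\pi}{H}$ is disconnected. Theorem~\ref{disconnected} then gives that $\rad{\pi}{H}$ is quasi-Frobenius with abelian kernel and complements, the complement side being the $\pi_y$-part, while the factorisation $H=\rad{\pi}{H}\times\rad{\pi'}{H}$ follows by checking that both Hall subgroups are normal, using that $\pi'$-lengths are coprime to $\pi$ together with the disconnected structure just obtained.

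For the two closing assertions, the elements of a complement $L$ of $\rad{\pi}{H}$ have $\pi_y$-length in $H$ and hence $p'$-length in $G$, forcing $\ce{P}{L}=P$ for a suitable $P\in\syl{p}{G}$, so that after conjugation each complement is centralised by some Sylow $p$-subgroup. The equality $\ze{H}=H\cap\ze{G}$ reduces, via $|h^H|=1$ for $h\in\ze{H}$, to excluding nontrivial $p$-power lengths among central-in-$H$ elements, which I would again dispatch using Theorem~\ref{teoA} together with the quasi-Frobenius structure (the center of $\rad{\pi}{H}$ being absorbed into a $P$-centralised complement), yielding $\ze{H}\le\ze{G}$ and hence the claimed equality.
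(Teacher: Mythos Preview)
Your proposal has a genuine gap at the step you yourself flag as the crux: the $p$-nilpotency of $G$. You describe an aspiration (``the goal is to show that $P$ accounts for the whole non-$p'$ part of $G$'') and list ingredients (Theorem~\ref{clave}, Theorem~\ref{teoA}, the no-mixing property), but there is no actual argument. The observation that neighbours of $y^G$ have $p'$-length only tells you that each such element is centralised by \emph{some} Sylow $p$-subgroup, not that a single $P$ works, and it says nothing about $p$-regular elements outside both neighbourhoods. The paper does \emph{not} prove $p$-nilpotency directly; it proceeds along a quite different path: first an analysis of $\pi(o(x))$ and $\pi(o(y))$ constrains the element orders (Steps~1--2); then $G/\ze{G}$ is shown not to be a $\pi$-group (Step~3); then a substantial argument establishes that $G$ is $\pi$-separable (Step~4), by building a nontrivial normal subgroup $T=\pmb{\operatorname{O}}^{\sigma}(\ce{G}{x}\cap\ce{G}{y})$, proving it $\sigma$-separable via Lemma~\ref{pi-sep}, and then applying Theorem~\ref{clave}; next Theorem~\ref{teoA} is used to show the class of \emph{minimal} length among $x^G,y^G$ lies at distance exactly $2$ from a maximal class $B_0$ (Step~5); finally Theorem~\ref{clave} gives $G=\rad{\pi}{G}\times\rad{\pi'}{G}$, and induction on $\rad{\pi}{G}$ reduces to the disconnected graph $\Gamma_p(\rad{\pi}{G})$, where the hypothesis $p\mid\min(|x^G|,|y^G|)$ is finally used to place $p$ outside $\pi_0$ and invoke Theorem~\ref{disconnected-p-reg}\,(a). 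The $p$-nilpotency is thus a \emph{consequence} of the disconnected-case theorem, not an intermediate step proved from scratch.

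Even granting $p$-nilpotency, your downstream argument is incomplete. You speak of ``the Hall $\pi$-subgroup $\rad{\pi}{H}$'' and claim the ordinary graph $\Gamma(\rad{\pi}{H})$ is disconnected, but $\rad{\pi}{H}$ need not be a Hall $\pi$-subgroup until you have already proved $H=\rad{\pi}{H}\times\rad{\pi'}{H}$, and your justification for that factorisation appeals to ``the disconnected structure just obtained'', which is circular. Moreover, the no-mixing property concerns $G$-class lengths; transporting it to $\rad{\pi}{H}$-class lengths requires knowing that the $\pi'$-part of $H$ splits off, which is again what you are trying to prove. In the paper this is handled by first obtaining the global decomposition $G=\rad{\pi}{G}\times\rad{\pi'}{G}$ (via Theorem~\ref{clave} and the fact, established in Step~4, that every $\pi'$-element has $\pi'$-class length), after which the reduction to $\rad{\pi}{G}$ is clean.
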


It is worth mentioning that the group may not be $p$-nilpotent whenever either $p$ divides $\max(|x^G|,|y^G|)$ or $p\notin \pi$, as we show in Example~\ref{example}. Nevertheless the structure of the $p$-complements of $G$ seems to be the same, as we claim in the next more general conjecture. This conjecture is also motivated by the main result of \cite{K}, where L.S. Kazarin characterised the structure of groups $G$ such that the graph $\Gamma(G)$ for ordinary classes possesses two \emph{isolated vertices}, i.e. two non-central classes $x^G$ and $y^G$ such that any other class has coprime length with one of them.

\begin{conjectureletters}
\label{conC}
Let $G$ be a $p$-separable group, for a given prime $p$. Suppose that $x^G,y^G\in\conp$ are non-central classes such that any other class $z^G\in\conp$ has coprime length with either $x^G$ or $y^G$. Set $\pi_x=\pi(x^G)$, $\pi_y=\pi(y^G)$ and $\pi=\pi_x\cup\pi_y$. Then every $p$-complement $H$ of $G$ verifies that $H=\rad{\pi}{H} \times \rad{\pi'}{H}$ where $\rad{\pi}{H}$ is a quasi-Frobenius group with abelian kernel and complements.
\end{conjectureletters}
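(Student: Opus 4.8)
The plan is to descend to a $p$-complement $H$ of $G$ and translate the hypothesis into the language of the ordinary common divisor graph $\Gamma(H)$, where the cited theorem of Kazarin applies. As $G$ is $p$-separable it is $p$-solvable, so it has Hall $p'$-subgroups, all conjugate; since these are exactly the $p$-complements, it suffices to argue for one fixed $H\in\hall{p'}{G}$, and we may assume $x,y\in H$. The key elementary ingredient is the class-size identity
\[
|x^G|_{p'}=|H:\ce{H}{x}|=|x^H|
\]
valid for every $p$-regular $x\in H$: the $p$-solvability of $G$ guarantees that $\ce{G}{x}$ acts transitively on the Hall $p'$-subgroups of $G$ that contain $x$ (note $x$ is a central $p'$-element of $\ce{G}{x}$, hence lies in each of them), so $\ce{H}{x}=H\cap\ce{G}{x}$ is a Hall $p'$-subgroup of $\ce{G}{x}$ and $|\ce{H}{x}|=|\ce{G}{x}|_{p'}$. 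In particular $\pi(x^H)=\pi_x\setminus\{p\}$ and $\pi(y^H)=\pi_y\setminus\{p\}$, which furnishes the dictionary between $\Gamma_p(G)$ and $\Gamma(H)$.

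Assume for the moment the generic situation in which neither $|x^G|$ nor $|y^G|$ is a power of $p$ and $\operatorname{gcd}(|x^G|,|y^G|)$ is a power of $p$ (equivalently $\pi_x\cap\pi_y\subseteq\{p\}$). Then $x^H,y^H$ are non-central, non-adjacent vertices of $\Gamma(H)$. I would show that they are two isolated vertices of $\Gamma(H)$ in the sense of \cite{K}: since $\ze{G}\cap H\leq\ze{H}$, every vertex of $\Gamma(H)$ is represented by an element that is non-central in $G$, hence is a vertex of $\Gamma_p(G)$; moreover no such $z^H$ can be a common neighbour, for if $z^H$ shares a prime with $x^H$ then that prime is a $p'$-divisor of $|x^G|$, so $\operatorname{gcd}(|z^G|,|x^G|)>1$, and the hypothesis forces $\operatorname{gcd}(|z^G|,|y^G|)=1$, whence $\pi(z^H)\cap\pi(y^H)\subseteq\pi(z^G)\cap\pi_y=\emptyset$.

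Granting this, the main theorem of \cite{K} applied to $H$ yields $H=\rad{\sigma}{H}\times\rad{\sigma'}{H}$ with $\rad{\sigma}{H}$ quasi-Frobenius with abelian kernel and complements, where $\sigma=\pi(x^H)\cup\pi(y^H)=\pi\setminus\{p\}$. Since $H$ is a $p'$-group we have $\rad{\pi}{H}=\rad{\sigma}{H}$, so this is precisely the asserted decomposition, and the generic case is settled.

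The difficulty, and the reason the statement is only conjectural, lies in the configurations where this $G$-to-$H$ dictionary degenerates. First, if $|x^G|$ (say) is a power of $p$, then $x$ becomes central in $H$ and disappears from $\Gamma(H)$, leaving a single distinguished vertex, so Kazarin's theorem no longer applies; the constraint that the pure $p$-class $x^G$ imposes on $H$ is invisible to $\Gamma(H)$, and this loss of arithmetic information is exactly what separates the $p$-regular graph from the ordinary one. Second, if $\pi_x\cap\pi_y$ contains a $p'$-prime, then $x^H$ and $y^H$ are adjacent and, worse, an $H$-class fused into $x^G$ inside $G$ becomes a spurious common neighbour of $x^H$ and $y^H$ in $\Gamma(H)$ that the $G$-hypothesis does not control. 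Overcoming these two obstacles seems to demand a genuinely new argument reading the $p$-structure directly off a single $p$-divisible class, in the spirit of Theorem~\ref{teoB} (where the extra assumption $p\mid\operatorname{min}(|x^G|,|y^G|)$ even forces $p$-nilpotency), rather than through the reduction to $\Gamma(H)$ sketched above; I expect this to be the crux.
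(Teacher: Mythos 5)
You should first be aware that the statement you were given is Conjecture~C of the paper: the authors do \emph{not} prove it, and they explicitly present it as open. Their evidence consists of Theorem~\ref{teoB}, which settles only the case $p\mid\operatorname{min}(|x^G|,|y^G|)$ by a route quite different from yours (a series of steps establishing $\pi$-separability via Lemma~\ref{pi-sep} and Theorem~\ref{clave}, a factorisation $G=\rad{\pi}{G}\times\rad{\pi'}{G}$, and an induction that terminates in the disconnected-graph Theorem~\ref{disconnected-p-reg}(a)), together with Remark~\ref{remark}, which reduces the remaining cases with $p\in\pi$ to the still-unsolved disconnected situation $\pi_0=\{p,q\}$ with a non-abelian Sylow $q$-subgroup. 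So a complete proof on your part would have been surprising; to your credit, you do not claim one, and your identification of where the difficulty lives is broadly consonant with the paper's own discussion. The problem is that even your ``generic case'' is not actually settled by your argument.

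The gap is the dictionary itself, namely the claim that for one fixed $p$-complement $H$ the identity $|z^H|=|z^G|_{p'}$ holds for \emph{every} $p$-regular $z\in H$. The divisibility that comes for free points the wrong way: since $\ce{H}{z}=H\cap\ce{G}{z}$ is a $p'$-subgroup of $\ce{G}{z}$, one only gets that $|z^G|_{p'}$ divides $|z^H|$, so $\Gamma(H)$ may a priori carry \emph{extra} prime divisors and adjacencies invisible to $\Gamma_p(G)$ --- and excluding a common neighbour of $x^H$ and $y^H$ is exactly where you need the containment $\pi(z^H)\subseteq\pi(z^G)$. Your justification, that $\ce{G}{z}$ acts transitively on the Hall $p'$-subgroups of $G$ containing $z$, is asserted without proof and is not a quotable standard fact; an orbit count shows it amounts to $z^G\cap H=z^{\no{G}{H}}$, i.e.\ to $\no{G}{H}$ controlling fusion at $z$, which is a Burnside/Wielandt-type property available for nilpotent (e.g.\ abelian) Hall subgroups but not for an arbitrary $p$-complement of a $p$-separable group. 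What \emph{is} standard is only the existential version: for each $z$ there is \emph{some} $p$-complement $H_z\ni z$ with $\ce{H_z}{z}\in\hall{p'}{\ce{G}{z}}$ (extend a Hall $p'$-subgroup of $\ce{G}{z}$, which contains $z$, to one of $G$); this is useless to you because your Kazarin transfer needs a single $H$ simultaneously good for $x$, $y$ and all $z\in H$. A telling consistency check: if your dictionary were available, the same three lines would dispose of the case $p\notin\pi$ (there the ``generic'' hypotheses hold automatically once $d(x^G,y^G)=3$), yet the paper explicitly leaves that case open --- Example~\ref{example} constructs such groups, and Remark~\ref{remark} explains that the outstanding cases are equivalent to a hard open problem on disconnected $\Gamma_p(G)$. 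So the true crux is not only the degenerate configurations you flag (a class of $p$-power size vanishing from $\Gamma(H)$, or fused $H$-classes creating spurious neighbours) but already the uncontrolled comparison between $|z^H|$ and $|z^G|$ in your main case.
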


Note that the feature of $\Gamma_p(G)$ in Conjecture~\ref{conC} particularly means that either it has dia\-meter three, or it is non-connected. In the former case, Theorem~\ref{teoB} provides some evidence for its veracity, although the situations where $p$ divides $\operatorname{max}(|x^G|,|y^G|)$ or $p\notin \pi$ are still open. In the last case, as previously mentioned, the $p$-structure of $G$ is still not well understood since there is a situation not solved yet: when the lengths of all classes lying in the connected component that contains a class in $\conp$ of maximal length are divisible by at most two primes $p$ and $q$, and a Sylow $q$-subgroup is non-abelian (see Theorem~\ref{disconnected-p-reg} below). In fact, there is a strong connection between both cases, as we further comment in Remark~\ref{remark}. Consequently, Conjecture~\ref{conC} seems to be a challenging problem.


\section{Preliminaries}

In the sequel, if $n$ is a positive integer, then we write $\pi(n)$ for the set of prime divisors of $n$. For the set of prime divisors of $|G|$ we simply write $\pi(G)$ and, as mentioned before, if $B=b^G$ is a conjugacy class of $G$, then $\pi(B)$ is the set of prime divisors of $|B|=|G:\ce{G}{b}|$. As usual, given a prime $p$, the set of all Sylow $p$-subgroups of $G$ is denoted by $\syl{p}{G}$, and $\hall{\pi}{G}$ is the set of all Hall $\pi$-subgroups of $G$ for a set of primes $\pi$. We write $\pi'$ for the set of primes that do not belong to $\pi$. The order of an element $g\in G$ is $o(g)$, and for any subset $\sigma\subseteq \pi(o(g))$, the $\sigma$-part of $g$ is denoted by $g_{\sigma}$. Finally, $n_{\pi}$ denotes the largest $\pi$-number that divides the positive integer $n$. The remaining notation and terminology used are standard in the frameworks of group theory and graph theory.

The following well-known lemma is used frequently and without further reference.

\begin{lemma}
Let $N$ be a normal subgroup of a group $G$, and let $\pi$ be a set of primes. Then:
\vspace*{-2mm}
\begin{itemize}
\setlength{\itemsep}{-1mm}
\item[\emph{(a)}] $|x^N|$ divides $|x^G|$, for any $x\in N$.
\item[\emph{(b)}] $|(xN)^{G/N}|$ divides $|x^G|$, for any $x\in G$.
\item[\emph{(c)}] If $xN\in G/N$ is a $\pi$-element, then $xN=yN$ for some $\pi$-element $y\in G$.
\item[\emph{(d)}] If $x,y\in G$ have coprime orders and they commute, then $\ce{G}{xy}=\ce{G}{x}\cap\ce{G}{y}$. In particular, both $|x^G|$ and $|y^G|$ divide $|(xy)^G|$.
\end{itemize}
\end{lemma}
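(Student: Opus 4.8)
The plan is to prove the four statements separately, each by a short index-divisibility argument built on the interplay between centralisers in $G$ and in the relevant quotient or subgroup. For part (a), I would first observe that for $x\in N$ one has $\ce{N}{x}=\ce{G}{x}\cap N$, so that $|x^N|=|N:\ce{G}{x}\cap N|$. The second isomorphism theorem then identifies this index with $|N\ce{G}{x}:\ce{G}{x}|$, which is a divisor of $|G:\ce{G}{x}|=|x^G|$, as required. Part (b) is dual: the canonical epimorphism $G\to G/N$ sends $\ce{G}{x}$ into $\ce{G/N}{xN}$, hence $\ce{G}{x}N/N\leq \ce{G/N}{xN}$, and therefore $|(xN)^{G/N}|=|G/N:\ce{G/N}{xN}|$ divides $|G/N:\ce{G}{x}N/N|=|G:\ce{G}{x}N|$, which in turn divides $|G:\ce{G}{x}|=|x^G|$.

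For part (c), I would use the canonical $\pi$-decomposition of an element. Writing $x=x_\pi x_{\pi'}$ as the product of its commuting $\pi$- and $\pi'$-parts, both of which are powers of $x$, the images modulo $N$ give $xN=(x_\pi N)(x_{\pi'} N)$ with $x_\pi N$ a $\pi$-element and $x_{\pi'}N$ a $\pi'$-element of $G/N$ that commute. By uniqueness of the $\pi$-decomposition in $G/N$, these are precisely the $\pi$- and $\pi'$-parts of $xN$; since $xN$ is assumed to be a $\pi$-element, its $\pi'$-part $x_{\pi'}N$ is trivial, so $xN=x_\pi N$ and the $\pi$-element $y:=x_\pi$ has the desired property.

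Finally, for part (d) the inclusion $\ce{G}{x}\cap\ce{G}{y}\subseteq\ce{G}{xy}$ is immediate. The key observation for the reverse inclusion — and the only point requiring a little care — is that when $x$ and $y$ commute and have coprime orders, both $x$ and $y$ lie in $\langle xy\rangle$: choosing $m$ with $m\equiv 1\pmod{o(x)}$ and $m\equiv 0\pmod{o(y)}$ yields $(xy)^m=x^m y^m=x$, and symmetrically one recovers $y$. Consequently any element centralising $xy$ centralises both $x$ and $y$, giving $\ce{G}{xy}\subseteq\ce{G}{x}\cap\ce{G}{y}$ and hence the claimed equality. The final divisibility assertions then follow at once: from $\ce{G}{xy}\leq\ce{G}{x}$ we obtain that $|x^G|=|G:\ce{G}{x}|$ divides $|G:\ce{G}{xy}|=|(xy)^G|$, and symmetrically $|y^G|$ divides $|(xy)^G|$.
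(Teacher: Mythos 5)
Your proof is correct in all four parts. Note that the paper itself offers no proof of this lemma --- it is stated as ``well-known'' and used without further reference --- so there is no authorial argument to compare against; your index-divisibility arguments for (a) and (b), the uniqueness of the $\pi$-decomposition for (c), and the observation that $x,y\in\langle xy\rangle$ for (d) are exactly the standard textbook proofs one would expect.
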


The situation when $\Gamma(G)$, the common divisor graph for ordinary classes, is non-connected was completely characterised in \cite{K} (and also in \cite{BHM} using different techniques), whilst the corresponding situation for $\Gamma_p(G)$ was deeply studied in \cite{BF3}.

\begin{theorem}
\label{disconnected}
Let $G$ be a group. Then $\Gamma(G)$ is non-connected if and only if $G$ is a quasi-Frobenius group with abelian kernel and complements. Further, $G$ has exactly two non-trivial class sizes, namely the orders of the Frobenius kernel and the Frobenius complements of $G/\ze{G}$.
\end{theorem}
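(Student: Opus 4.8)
The plan is to prove the two implications separately, the reverse one being essentially a centraliser computation and the forward one carrying the weight of the argument. For the reverse direction, assume $G$ is quasi-Frobenius with abelian kernel $K$ and abelian complements $H$, so that $\bar{G}:=G/\ze{G}$ is Frobenius with kernel $\bar{K}:=K/\ze{G}$ and complements $\bar{H}:=H/\ze{G}$, both abelian. I would compute centralisers directly: since $\bar K$ is abelian and the complements act fixed-point-freely, one gets $\ce{\bar G}{\bar k}=\bar K$ for every $1\neq \bar k\in \bar K$, and hence $\ce{G}{g}=K$ for every non-central $g\in K$; dually $\ce{G}{g}=H$ for every non-central $g$ in a complement. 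Thus the non-central elements of the kernel have class length $\abs{\bar H}$, those of the complements have class length $\abs{\bar K}$, and these are the only non-trivial class lengths. As $\gcd(\abs{\bar K},\abs{\bar H})=1$ in a Frobenius group, the vertices of $\Gamma(G)$ split into two cliques (one for each length) with no edge between them, so $\Gamma(G)$ is disconnected; this also yields the final clause.

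For the forward direction, assume $\Gamma(G)$ is disconnected. Using that $\Gamma(G)$ has at most two connected components (proved in \cite{BHM}), there are exactly two components, say $\mathcal{C}_1$ and $\mathcal{C}_2$. Let $P_i$ be the set of primes dividing some class length occurring in $\mathcal{C}_i$; then $P_1\cap P_2=\emptyset$, since a common prime would produce an edge across the components. I would first observe that no non-central class length can involve primes from both $P_1$ and $P_2$: such a class would be adjacent (via a $P_1$-prime) to $\mathcal{C}_1$ and (via a $P_2$-prime) to $\mathcal{C}_2$, forcing the two components to coincide. Consequently every non-central class length is either a $P_1$-number or a $P_2$-number, and the non-central classes are partitioned into those two families.

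The core of the proof is then to turn this partition into a Frobenius factorisation. I would set $K$ equal to the union of $\ze{G}$ with the classes of one of the two components and aim to show that, for the appropriate choice, $K$ is a normal subgroup of $G$. Normality is free, since $K$ is a union of conjugacy classes; the substance is closure under multiplication. Here the essential tool is the well-known factorisation $G=\ce{G}{a}\,\ce{G}{b}$, valid whenever $\gcd(\abs{a^G},\abs{b^G})=1$, applied to representatives $a$ of $\mathcal{C}_1$ and $b$ of $\mathcal{C}_2$: it pins down the orders of the relevant centralisers and Hall subgroups and lets one recognise one component, together with the centre, as a subgroup. Once $K\trianglelefteq G$ is established, I would show that $\bar G/\bar K$ acts fixed-point-freely on $\bar K=K/\ze{G}$ — an element with a non-trivial fixed point would centralise a non-central element of $K$ and thereby acquire a class length lying in the wrong prime set — so that $\bar G$ is Frobenius with kernel $\bar K$, the other component supplying the complements. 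Finally, combining the Frobenius structure with the fact that the elements of $\bar K$ share a single class length forces $\ce{\bar G}{\bar k}=\bar K$ for $1\neq \bar k\in\bar K$, whence $\bar K$ is abelian; the complements are abelian by the symmetric argument, and the two non-trivial class lengths are exactly $\abs{\bar K}$ and $\abs{\bar H}$.

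I expect the main obstacle to be proving that one of the two component-sets, together with $\ze{G}$, is closed under multiplication, i.e.\ is genuinely a subgroup. The combinatorics of $\Gamma(G)$ separates the class lengths into two coprime prime sets almost immediately, but converting the statement ``these elements have class length coprime to those'' into the algebraic assertion that one family forms a normal subgroup on which the other acts fixed-point-freely is exactly the delicate point; it is where the factorisation $G=\ce{G}{a}\,\ce{G}{b}$ and an It\^o-type control of the Sylow and Hall structure of the centralisers must be used with care. Once $K$ is known to be a subgroup, the Frobenius property and the abelianness of the kernel and complements follow comparatively routinely.
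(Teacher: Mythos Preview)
The paper does not actually prove this theorem: its entire proof is the sentence ``The first claim is \cite[Theorem]{K}. The second assertion is straightforward.'' So you are attempting to supply a self-contained argument where the paper simply invokes Kazarin's result (with \cite{BHM} as an alternative reference).

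Your reverse direction is correct and complete: the centraliser computation in a quasi-Frobenius group with abelian kernel and complements is exactly as you describe, and it yields both the disconnectedness of $\Gamma(G)$ and the identification of the two class lengths with $|\bar K|$ and $|\bar H|$. This is presumably what the paper means by ``straightforward''.

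For the forward direction, your outline is in the spirit of the Bertram--Herzog--Mann argument in \cite{BHM}, and the preliminary observations (that $P_1\cap P_2=\emptyset$ and that every non-trivial class length is a pure $P_1$- or $P_2$-number) are correct. However, what you have written is explicitly a plan rather than a proof: you identify the crux --- showing that $\ze{G}$ together with one component is closed under multiplication --- and then stop, saying you ``expect'' this to be the main obstacle and gesturing at $G=\ce{G}{a}\ce{G}{b}$ and It\^o-type control. That is an honest assessment, but it means the hard implication is not proved. In the actual arguments of \cite{K} and \cite{BHM} this step requires real work (e.g.\ an analysis of which primes divide $|G/\ze{G}|$, reduction to the nilpotent case, or use of products of classes of coprime length), and none of that is carried out here. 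If you intend this as a replacement for the citation, you would need to fill precisely this gap; as written, it is a sound roadmap but not a proof.
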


\begin{proof}
The first claim is \cite[Theorem]{K}. The second assertion is straightforward.
\end{proof}

\medskip

\begin{theorem}
\label{disconnected-p-reg}
Let $G$ be a $p$-separable group, for a given prime $p$, and let $H$ be a $p$-complement of $G$. Suppose that $\Gamma_p(G)$ is non-connected. Let $B_0\in\conp$ be a non-central class of maximal cardinality, and let $\pi_0$ be the set of prime divisors of the lengths of all conjugacy classes lying in the same connected component as $B_0$.
\begin{itemize}
\setlength{\itemsep}{-1mm}
\item[\emph{(a)}] If $p\notin \pi_0$, then $G$ is $p$-nilpotent, $H$ is quasi-Frobenius with abelian kernel and complements, and $\ze{H}=H\cap\ze{G}$. Moreover, each complement of $H$ is centralised by some Sylow $p$-subgroup of $G$.
\item[\emph{(b)}] If $p\in\pi_0$, and either $|\pi_0|\geq 3$ or $G$ has abelian Hall $\pi_0\smallsetminus\{p\}$-subgroups, then $\ze{H}=H\cap\ze{G}$, and $H$ is a quasi-Frobenius group with abelian kernel and complements.
\end{itemize}
\end{theorem}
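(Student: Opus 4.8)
The plan is to read off the structure of a $p$-complement $H\in\hall{p'}{G}$ from Theorem~\ref{disconnected} applied to $H$, after faithfully transferring the disconnectedness of $\Gamma_p(G)$ down to $\Gamma(H)$. I would first record the coarse shape of a disconnected common divisor graph: since $\Gamma_p(G)$ has at most two connected components in the $p$-separable case (\cite{BF1}), there are exactly two, the component $\mathcal{C}_0$ containing $B_0$ and a second one $\mathcal{C}_1$; and because adjacency means sharing a prime, the prime sets $\pi_0=\pi(\mathcal{C}_0)$ and $\pi(\mathcal{C}_1)$ are disjoint. The governing tool throughout is the size relation for $p$-regular elements: for $x\in H$ one always has that $|x^H|$ divides $|x^G|$, and the sharper statement I would establish under the running hypotheses is that the $p'$-part of $|x^G|$ equals $|x^H|$, so that --- up to fusion of $H$-classes in $G$ and up to vertices lost by becoming central in $G$ --- the graph $\Gamma(H)$ is $\Gamma_p(G)$ with the prime $p$ deleted. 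Guaranteeing that this correspondence is faithful (no two non-adjacent $H$-classes merge into one $G$-class, no vertex silently disappears) is the technical spine of the argument.

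For part (a), where $p\notin\pi_0$, every class of $\mathcal{C}_0$ --- in particular $B_0$ --- has $p'$-length, so each of its elements is centralised by a full Sylow $p$-subgroup of $G$. The first real task, and the main obstacle, is to upgrade this to $p$-nilpotency. I would leverage the maximality of $B_0$ together with the total absence of a path in $\Gamma_p(G)$ from $\mathcal{C}_0$ to $\mathcal{C}_1$: this rigidity, fed into a coprime-action / transfer argument adapted to $p$-separable groups (treating separately the subcase where $p$ divides no $p$-regular class length), should produce a normal $p$-complement. Once $G=H\rtimes P$ with $H\trianglelefteq G$, the $p$-regular classes of $G$ are exactly the $G$-classes meeting $H$, the relation $|x^G|_{p'}=|x^H|$ holds cleanly, and, since $p\notin\pi_0$, deleting $p$ leaves $\mathcal{C}_0$ unchanged; thus $\Gamma(H)$ inherits the two-component splitting. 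Theorem~\ref{disconnected} then yields that $H$ is quasi-Frobenius with abelian kernel and complements and pins down its two class lengths.

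For part (b), where $p\in\pi_0$, $p$-nilpotency may genuinely fail (cf. the examples in the introduction), so I would argue directly that $\Gamma(H)$ is disconnected. The danger is exactly that $p$, now a prime of $\mathcal{C}_0$, might be the unique prime linking two otherwise separate blocks of $\mathcal{C}_0$; deleting it could split $\mathcal{C}_0$ and, after fusion in $G$, leave $\Gamma(H)$ with a different, possibly connected, component pattern. This is where the hypotheses intervene. If $|\pi_0|\geq 3$ there remain at least two primes in $\pi_0\smallsetminus\{p\}$, and I would show they still cohere into a single component of $\Gamma(H)$, preserving the two-component picture; if instead the Hall $(\pi_0\smallsetminus\{p\})$-subgroups are abelian, the contribution of $\mathcal{C}_0$ to the $H$-class lengths is controlled tightly enough (a single non-$p$ prime behaving multiplicatively) to force the same conclusion. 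In both subcases Theorem~\ref{disconnected} applied to $H$ finishes the structural part.

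Finally the centraliser statements. The inclusion $H\cap\ze{G}\leq\ze{H}$ is immediate; for the reverse I would argue that an element of $\ze{H}$ non-central in $G$ gives a $p$-regular class of nontrivial $p$-power length, which is incompatible with the two-component, two-length picture just obtained (in case (a) this follows at once from $p$-nilpotency). That each complement of $H$ is centralised by some Sylow $p$-subgroup I would extract from the quasi-Frobenius structure: a complement $U$ is abelian, so analysing $\ce{G}{u}$ for $u\in U$ --- whose $p$-part is a full Sylow $p$-subgroup in case (a), since $|u^G|$ is then a $\pi_0$-number coprime to $p$ --- produces a Sylow $p$-subgroup of $G$ centralising $U$. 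The two genuinely hard points remain the $p$-nilpotency in (a) and the faithfulness of the passage $\Gamma_p(G)\rightsquigarrow\Gamma(H)$ in (b); the residual case $p\in\pi_0$, $|\pi_0|=2$ with non-abelian Sylow subgroup is left untreated precisely because there this faithfulness can break down, which is the open situation flagged in the introduction.
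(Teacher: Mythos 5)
You should first note that the paper does not prove this theorem at all: it is a compilation of known results, quoted with the proof ``Statement (a) follows from [BF1, Theorem 5(a)] and [BF3, Theorem 8], and statement (b) can be obtained from [BF3, Corollary 10(b), Theorem 12(b)].'' Your high-level plan --- pass from $\Gamma_p(G)$ to the ordinary graph $\Gamma(H)$ of a $p$-complement, show the latter is disconnected, and invoke Theorem~\ref{disconnected} (Kazarin/Bertram--Herzog--Mann) --- is indeed the strategy of those cited papers, so the architecture is right. But as a proof your text has genuine gaps at exactly the points you yourself flag as ``the two genuinely hard points,'' and flagging a gap is not the same as closing it. In (a), the $p$-nilpotency of $G$ is asserted to follow from ``a coprime-action / transfer argument adapted to $p$-separable groups'' that ``should produce a normal $p$-complement''; no such argument is given, and this is precisely the nontrivial content of [BF1, Theorem 5(a)]. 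Likewise the relation $|x^G|_{p'}=|x^H|$ for $p$-regular $x$ (which in $p$-solvable groups comes from the fact that $\ce{H}{x}$ is a $p$-complement of $\ce{G}{x}$ for a suitable $H$ containing $x$) is announced as something you ``would establish'' but is never established.

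In (b) the situation is worse, because there the whole theorem lives in the step you leave open: you must show that deleting the prime $p$ from the component $\mathcal{C}_0$, and then passing to $H$-classes (with possible fusion and possible loss of vertices that become central), still leaves a disconnected graph $\Gamma(H)$. You correctly identify that the hypotheses ``$|\pi_0|\geq 3$'' or ``abelian Hall $(\pi_0\smallsetminus\{p\})$-subgroups'' are exactly what prevents $\mathcal{C}_0$ from splitting, but the justification offered --- ``I would show they still cohere into a single component'' and ``the contribution of $\mathcal{C}_0$ \ldots is controlled tightly enough'' --- is a restatement of what must be proved, not a proof. This is the content of [BF3, Corollary 10(b) and Theorem 12(b)], and it is delicate: the unresolved case $\pi_0=\{p,q\}$ with non-abelian Sylow $q$-subgroup, which the paper highlights as still open, shows that the ``faithfulness'' you rely on can genuinely fail and cannot be waved through. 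The centraliser statements at the end ($\ze{H}=H\cap\ze{G}$ and the Sylow $p$-subgroup centralising each complement) are argued more concretely and are essentially fine once the structural part is in place, but they rest on the unproven core. In short: correct roadmap, matching the literature the paper cites, but the two theorems you would need to prove along the way are left as placeholders.
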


\begin{proof}
Statement (a) follows from \cite[Theorem 5 (a)]{BF1} and \cite[Theorem 8]{BF3}, and statement (b) can be obtained from \cite[Corollary~10~(b), Theorem~12~(b)]{BF3}.
\end{proof}

\medskip

We point out that, by \cite[Theorem 4 (b)]{BF3}, if $p\in\pi_0$, then $|\pi_0|\geq 2$. Hence, as mentioned in the Introduction, it remains unsolved in the above theorem the situation where $\pi_0=\{p,q\}$ and a Sylow $q$-subgroup of $G$ is non-abelian.

A basic result when one works with $p$-regular conjugacy classes is the one below.

\begin{lemma}
\label{technical}
Suppose that $G$ is a $p$-separable group, for certain prime $p$, and that both $B=b^G$ and $C=c^G$ are non-central classes in $\conp$ of coprime lengths. Then:
\vspace*{-2mm}
\begin{itemize}
\setlength{\itemsep}{-1mm}
\item[\emph{(a)}] $BC=(bc)^G\in\operatorname{Con}(G_{p'})$ and $1\neq |BC|$ divides $|B|\cdot |C|$.
\item[\emph{(b)}] If $D\in\operatorname{Con}(G_{p'})$ is non-central, $d(D,C)\geq 3$ (allowing $d(D,C)=\infty$) and $|C|<|D|$, then $|DC|=|D|$, $\langle CC^{-1}\rangle\leqslant \langle DD^{-1}\rangle$ and $|\langle CC^{-1}\rangle|$ divides $|D|$.
\end{itemize}
\end{lemma}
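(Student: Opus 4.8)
The plan is to treat the two parts in sequence, with part~(a) providing the product class that drives everything in part~(b). For part~(a): since $\gcd(|B|,|C|)=1$, the indices $|G:\ce{G}{b}|$ and $|G:\ce{G}{c}|$ are coprime, and the elementary fact that two subgroups of coprime index generate the whole group gives $G=\ce{G}{b}\ce{G}{c}$. From this one gets $c^{G}=c^{\ce{G}{b}}$, and then for any $b^{g}c^{h}\in BC$, writing $b^{g}=b^{v}$ with $v\in\ce{G}{c}$ and conjugating by $v^{-1}$ shows $b^{g}c^{h}$ is conjugate to some $bc^{h'}=(bc)^{u}$ with $u\in\ce{G}{b}$; hence $BC=(bc)^{G}$ is a single class. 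Since $\ce{G}{b}\cap\ce{G}{c}\leqslant\ce{G}{bc}$ and $|G:\ce{G}{b}\cap\ce{G}{c}|=|B||C|$ (again from $G=\ce{G}{b}\ce{G}{c}$), $|BC|$ divides $|B||C|$; and $|BC|=1$ would force $\ce{G}{c}=\ce{G}{b}$, whence $|B|=|C|=1$, so $|BC|\neq1$. That $bc$ is again $p$-regular is where $p$-separability enters, and I would quote it from the product theory of $p$-regular classes in the cited work of Beltrán and Felipe.

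Turn to part~(b). First, $d(D,C)\ge 3$ forces $D\not\sim C$, i.e.\ $\gcd(|D|,|C|)=1$, so part~(a) applies to $(D,C)$ and $F:=DC=(dc)^{G}\in\conp$ is a single non-central class with $|F|$ dividing $|D||C|$. The crucial observation for $|DC|=|D|$ is an orbit count: from $G=\ce{G}{c}\ce{G}{d}$ we have $d^{G}=d^{\ce{G}{c}}$, and for $v\in\ce{G}{c}$ one computes $(dc)^{v}=d^{v}c$, so the $\ce{G}{c}$-orbit of $dc$ is $\{d'c:d'\in D\}$, of size exactly $|D|$; hence $|F|\ge|D|$. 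Now if a prime $q$ divided $\gcd(|F|,|C|)$, then since $|F|\ge|D|>|C|$ we would have $F\neq C,D$, so $F$ would be a common neighbour of $C$ and $D$ unless $\gcd(|F|,|D|)=1$; as $d(D,C)\ge3$ forbids a common neighbour, we are forced to $\gcd(|F|,|D|)=1$, and then $|F|\mid|D||C|$ gives $|F|\mid|C|$, contradicting $|F|\ge|D|>|C|$. Thus $\gcd(|F|,|C|)=1$, so $|F|\mid|D|$, and with $|F|\ge|D|$ we conclude $|DC|=|F|=|D|$.

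For the inclusion $\langle CC^{-1}\rangle\leqslant\langle DD^{-1}\rangle$, set $W=\langle DD^{-1}\rangle$ and pass to $G/W$, where $dW$ is central by definition of $W$. Then $\overline{dc}$ differs from $\overline{c}$ by a central element, so $\ce{G/W}{\overline{dc}}=\ce{G/W}{\overline{c}}$ and the common length $s:=|\overline{dc}^{\,G/W}|=|\overline{c}^{\,G/W}|$ divides both $|DC|=|D|$ and $|C|$; since $\gcd(|C|,|D|)=1$ this yields $s=1$, i.e.\ $cW\in\ze{G/W}$, so $CC^{-1}\subseteq W$ and $\langle CC^{-1}\rangle\leqslant W$. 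Note this step genuinely uses the equality $|DC|=|D|$ just obtained.

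It remains to prove $|\langle CC^{-1}\rangle|$ divides $|D|$, and this is the step I expect to be the main obstacle. Writing $V=\langle CC^{-1}\rangle\trianglelefteq G$, the assertion is equivalent to $V$ acting semiregularly on $D$ by conjugation, i.e.\ to $V\cap\ce{G}{d}=1$: semiregularity gives $|V|\mid|D|$, since the orbits of the normal subgroup $V$ on the class $D$ all have size $|V:V\cap\ce{G}{d}|=|V|$ and partition $D$. The inclusion $V\leqslant W$ together with the identity $F=Dc$ (which yields $c^{g}c^{-1}\in D^{-1}D$ for every $g\in G$) confines $V$ tightly, and to finish I would assume $1\neq v\in V\cap\ce{G}{d}$ and try to manufacture a class adjacent to both $C$ and $D$, contradicting $d(D,C)\ge3$. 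The delicate point—and the reason $p$-separability is needed here rather than merely for bookkeeping—is that such a $v$ need not be $p$-regular, so it cannot be promoted directly to a vertex of $\Gamma_p(G)$; controlling the $p$-part of $V\cap\ce{G}{d}$, presumably via the structural results on $p$-regular class sizes quoted earlier, is where the real work lies.
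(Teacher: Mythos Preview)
The paper does not give a self-contained proof of this lemma; it simply cites \cite{LZ}. So there is no detailed argument in the paper to compare against, and what matters is whether your proof stands on its own.

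Your treatment of part~(a) and of the first two assertions of part~(b) is correct and standard. In particular, the orbit-count lower bound $|DC|\ge|D|$, the exclusion argument using $d(D,C)\ge 3$ to force $|DC|=|D|$, and the quotient argument for $\langle CC^{-1}\rangle\leqslant\langle DD^{-1}\rangle$ are all sound.

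The gap is in the final assertion, and it lies not in the difficulty you anticipate but in the choice of action. You try to show that $|V|$ divides $|D|$ by having $V=\langle CC^{-1}\rangle$ act semiregularly on $D$ \emph{by conjugation}; this is neither equivalent to the claim (it is only sufficient) nor the natural route, and it leads you into an unnecessary worry about the $p$-regularity of elements of $V\cap\ce{G}{d}$. The correct action is \emph{right translation}. You already have the identity $DC=Dc$ from $|DC|=|D|$; but equally $Dc'\subseteq DC$ with $|Dc'|=|D|=|DC|$ for every $c'\in C$, so $Dc'=DC=Dc$ and hence $D\,(c'c^{-1})=D$ for all $c,c'\in C$. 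Since $\{g\in G:Dg=D\}$ is a subgroup of $G$ containing $CC^{-1}$, it contains $V$, and therefore $DV=D$. Thus $D$ is a disjoint union of left cosets of $V$, each of size $|V|$, and $|V|$ divides $|D|$. No appeal to $p$-separability and no analysis of $V\cap\ce{G}{d}$ is needed at this step.
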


\begin{proof}
This is partially \cite[Lemma 1]{LZ}.
\end{proof}

\medskip

Next we collect some features of non-central classes in $\operatorname{Con}(G_{p'})$ of maximal length.

\begin{proposition}
\label{prop}
Let $p$ be a prime, and suppose that $G$ is a $p$-separable group. Let $B_0\in\conp$ be a non-central conjugacy class of maximal length. Set $$ S=\langle D \; : \; D\in\conp \; \text{is non-central and} \; d(B_0,D)\geq 2\rangle. $$ Then the following properties hold.
\vspace*{-2mm}
\begin{itemize}
\setlength{\itemsep}{-1mm}
\item[\emph{(a)}] $S$ is an abelian normal $p'$-subgroup of $G$.
\item[\emph{(b)}] If $\ze{G}_{p'}$ denotes the $p$-complement of $\ze{G}$, then $\ze{G}_{p'}\leqslant S$ and $\pi(S/\ze{G}_{p'}) \subseteq \pi(B_0)$.
\item[\emph{(c)}] If $D=d^G\in\conp$ is a non-central conjugacy class such that $d(D,B_0)\geq 3$, then $\ce{G}{d}/S$ is a $\{p,q\}$-group, for some prime $q\in\pi(B_0)\cap\pi(o(d))$.
\item[\emph{(d)}] Let $D\in\conp$ be a non-central class such that $d(D,B_0)=3$. If $$D\mbox{ --- }A\mbox{ --- }C\mbox{ --- }B_0$$ is a path in $\Gamma_p(G)$, then $p\notin \pi(B_0)\cup\pi(D)$. Moreover, if $D'\in\conp$ is a non-central class such that $d(D',B_0)=3$, then $|D'|=|D|$.
\end{itemize}
\end{proposition}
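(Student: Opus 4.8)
The plan is to prove the four items in order, using (a) as the backbone, since once $S$ is abelian every generator $d$ satisfies $S\leqslant\ce{G}{d}$, which is what makes the quotients in (c) meaningful. Normality of $S$ is immediate: it is generated by a union of conjugacy classes, hence $G$-invariant. Every generating class $D$ has $d(B_0,D)\geq 2$, so $B_0$ and $D$ are non-adjacent; thus $\gcd(\abs{B_0},\abs{D})=1$ and, by maximality together with non-centrality, $\abs{D}<\abs{B_0}$. The heart of (a) is commutativity. I would show that the elements of two generators $D,E$ commute by exploiting the coprimality to $\abs{B_0}$: via Lemma~\ref{technical}(a) the product of two coprime-length generators is again a single $p$-regular non-central class of length dividing $\abs{D}\abs{E}$, still coprime to $\abs{B_0}$, hence of length at most $\abs{B_0}$; iterating this closure and invoking the maximality of $\abs{B_0}$ (in the spirit of the Bertram--Herzog--Mann analysis of $\Gamma(G)$, adapted to $p$-regular classes) forces $S$ to have trivial derived subgroup. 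Once $S$ is abelian, it is a $p'$-group, because in an abelian group the $p'$-elements form a subgroup and here it contains all the generators. This abelian $p'$ step is where I expect the bulk of the work in (a) to lie.

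For (b), to get $\ze{G}_{p'}\leqslant S$ I would use a shift trick: for any generator $d$ and any $z\in\ze{G}_{p'}$, the element $zd$ is $p$-regular and $(zd)^G=zD$ has the same length $\abs{D}$, coprime to $\abs{B_0}$, so $(zd)^G$ again lies at distance $\geq 2$; hence $zd\in S$ and $z=(zd)d^{-1}\in S$. For $\pi(S/\ze{G}_{p'})\subseteq\pi(B_0)$, note first $\ze{G}\cap S=\ze{G}_{p'}=\ce{S}{G}$, so the claim concerns the primes on which $G$ acts non-trivially on the abelian $p'$-group $S$. Given such a prime $q$, there is a non-central $q$-element $s\in S$ whose class (of length dividing that of a generator, hence coprime to $\abs{B_0}$) again sits at distance $\geq 2$; put $T=\langle s^G\rangle\leqslant S$, an abelian normal $q$-subgroup with $T\not\leqslant\ze{G}$. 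If some $b_0\in B_0$ fails to centralise $T$, then $q$ divides $[T:\ce{T}{b_0}]$, which divides $\abs{B_0}$, so $q\in\pi(B_0)$; the residual case where $B_0$ centralises $T$ I would rule out by forming the coprime product $(b_0 s)^G$ and using Lemma~\ref{technical} to exceed $\abs{B_0}$ unless $q$ already divides $\abs{B_0}$. Controlling the element orders (the possibility $q\mid o(b_0)$) in this last step is the delicate point.

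For (c), $d(D,B_0)\geq 3$ forces $d\in S$, so $S\leqslant\ce{G}{d}$ and the quotient is well defined. The clean engine is the following: let $r\neq p$ divide $[\ce{G}{d}:S]$ and choose an $r$-element $u\in\ce{G}{d}\setminus S$; it is non-central, else $u\in\ze{G}_{p'}\leqslant S$. If $r\nmid o(d)$, then $u$ and $d$ have coprime orders and commute, so $du$ is $p$-regular with $\abs{D}$ dividing $\abs{(du)^G}$; since $\abs{u^G}$ divides $\abs{(du)^G}$ as well, were $u^G$ to share a prime with $B_0$ the path $D\mbox{ --- }(du)^G\mbox{ --- }B_0$ would give $d(D,B_0)\leq 2$, a contradiction. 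Hence $\abs{(du)^G}$ is coprime to $\abs{B_0}$, so $du\in S$ and $u=d^{-1}(du)\in S$, a contradiction. Thus every prime $r\neq p$ dividing $[\ce{G}{d}:S]$ divides $o(d)$. To see there is exactly one such prime and that it lies in $\pi(B_0)$, I would invoke Lemma~\ref{technical}(b) with $C=D$ and larger class $B_0$, giving $\langle DD^{-1}\rangle\leqslant\langle B_0B_0^{-1}\rangle$ with $\abs{\langle DD^{-1}\rangle}$ dividing $\abs{B_0}$, and combine it with the control of $S$ from (b) to pin the surviving prime to a single $q\in\pi(B_0)\cap\pi(o(d))$.

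For (d), the assertion $p\notin\pi(B_0)\cup\pi(D)$ is exactly where $p$-separability and the structural input of \cite{FMS} (that the $p$-regular class lengths can be neither all $p$-numbers nor all $p'$-numbers) enter: I would argue that if $p$ divided $\abs{B_0}$ or $\abs{D}$, the $p$-part of a suitable centraliser would produce a $p$-regular class bridging $D$ to $B_0$ in two steps, contradicting $d(D,B_0)=3$. For $\abs{D'}=\abs{D}$ on all distance-three classes, I would again use Lemma~\ref{technical}(b), applied both to pairs of distance-three classes and to $B_0$: it confines the invariant $\langle DD^{-1}\rangle$ inside $\langle B_0B_0^{-1}\rangle$ and, together with the description in (c) of $\ce{G}{d}/S$ as a $\{p,q\}$-group, forces the $\{p,q\}'$-part of $\abs{D}$ to coincide with that of $[G:S]$ and the $q$-part to be common, yielding a single value. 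The main obstacle throughout is the bookkeeping with element orders -- splitting representatives into their $q$-part and $q'$-part and distinguishing when a prime divides an order versus a class length -- which is precisely what makes the abelian step in (a), the residual case in (b), and the uniqueness of $q$ in (c)--(d) the genuinely technical points.
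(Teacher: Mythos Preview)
The paper does not prove this proposition at all: it simply records that (a)--(c) are \cite[Proposition~1]{BF1} in the connected case and \cite[Theorem~4]{BF3} in the non-connected case, and that (d) is again \cite[Proposition~1]{BF1}. So there is no in-paper argument to compare against; what you have sketched is an attempt to reconstruct the content of \cite{BF1,BF3} from scratch.

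Your outline for (a)--(c) is in the right spirit and uses the right tools (products of coprime-length classes, maximality of $\abs{B_0}$, Lemma~\ref{technical}), and you honestly flag the hard steps rather than resolving them. One concrete slip in (a): to get commutativity you invoke Lemma~\ref{technical}(a) for ``two coprime-length generators'' $D,E$, but nothing forces two classes with $d(B_0,D),d(B_0,E)\geq 2$ to have coprime lengths with \emph{each other}; they may well be adjacent in $\Gamma_p(G)$. The arguments in \cite{BF1} proceed by playing each generator against $B_0$ (where coprimality \emph{is} guaranteed), not against other generators, so your iteration scheme would need to be reorganised around $B_0$.

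The real misconception is in (d). You write that $p\notin\pi(B_0)\cup\pi(D)$ is ``exactly where \cite{FMS} enters''. It is not: part (d) is quoted from \cite[Proposition~1]{BF1} (2002), which predates \cite{FMS} (2020) by almost two decades. In the present paper Theorem~\ref{clave} is the \emph{new} ingredient powering Theorem~\ref{teoA}; it plays no role whatsoever in Proposition~\ref{prop}. The proof of (d) in \cite{BF1} goes by direct analysis of the path $D\mbox{ --- }A\mbox{ --- }C\mbox{ --- }B_0$ together with the structure of $S$ already obtained in (a)--(c), with no dichotomy on $p$-parts of class lengths. Your diagnosis of where the deep input sits is therefore off by one theorem: the \cite{FMS} machinery belongs to the proof of Theorem~\ref{teoA}, not here.
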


\begin{proof}
Statements (a), (b) and (c) are proved in \cite[Proposition 1]{BF1} when $\Gamma_p(G)$ is connected, and in \cite[Theorem 4]{BF3} for the non-connected case. Statement (d) is also proved in \cite[Proposition 1]{BF1}: note that $d(D,B_0)=3$ forces the graph $\Gamma_p(G)$ to be connected, since otherwise the two connected components are complete subgraphs by \cite[Theorem 3]{BF1}.
\end{proof}

\medskip

The previous statements (c) and (d) will not actually occur due to Theorem~\ref{teoA}. 

As mentioned in the Introduction, the result below is a key fact in the proof of Theorem~\ref{teoA}.

\begin{theorem}[\text{\cite[Corollary C]{FMS}}]
\label{clave}
Let $G$ be a $\pi$-separable group, for a set of primes $\pi$. Then the following statements are pairwise equivalent:
\begin{enumerate}
\setlength{\itemsep}{-1mm}
	\item[\emph{(a)}] Each $\pi$-element $x\in G$ has class size either a $\pi$-number or a $\pi'$-number.
	
	\item[\emph{(b)}] Either $G=\rad{\pi}{G}\times\rad{\pi'}{G}$ or it has abelian Hall $\pi$-subgroups and its $\pi$-length is at most 1.
	
	\item[\emph{(c)}] For every $\pi$-element $x\in G$, either all $\abs{x^G}$ are $\pi$-numbers or they are all $\pi'$-numbers.
\end{enumerate}
\end{theorem}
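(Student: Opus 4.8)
The plan is to establish the two equivalences $(a)\Leftrightarrow(c)$ and $(b)\Leftrightarrow(c)$, which together yield the pairwise equivalence. The implication $(c)\Rightarrow(a)$ is immediate, since the global dichotomy in (c) trivially forces the element-wise one in (a). For $(b)\Rightarrow(c)$ I would argue through the two alternatives of (b). If $G=\rad{\pi}{G}\times\rad{\pi'}{G}$, then any $\pi$-element $x$ must lie in $\rad{\pi}{G}$ (its $\rad{\pi'}{G}$-component would otherwise contribute a nontrivial $\pi'$-part to $o(x)$), so $\ce{G}{x}=\ce{\rad{\pi}{G}}{x}\times\rad{\pi'}{G}$ and $|x^G|$ is a $\pi$-number. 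If instead $G$ has abelian Hall $\pi$-subgroups, then---since in a $\pi$-separable group every $\pi$-element lies in some Hall $\pi$-subgroup and these are all conjugate---such an $x$ is contained in an abelian Hall $\pi$-subgroup $H\leqslant\ce{G}{x}$, whence $|x^G|$ is a $\pi'$-number. Either way the uniform statement (c) holds.

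The core of the argument is $(a)\Rightarrow(c)$, a ``no mixing'' statement: under (a) I must rule out the simultaneous existence of a $\pi$-element $a$ with $1<|a^G|$ a $\pi$-number and a $\pi$-element $b$ with $1<|b^G|$ a $\pi'$-number. I would proceed by induction on $|G|$, first checking that (a) is inherited by every quotient $G/N$ with $N\trianglelefteq G$ (using that a $\pi$-element of $G/N$ lifts to a $\pi$-element of $G$), so as to exploit the normal $\pi$-separable structure and, where possible, factor out $\rad{\pi'}{G}$ or $\rad{\pi}{G}$. The arithmetic leverage is that $a$ is centralised by a full Hall $\pi'$-subgroup while $b$ is centralised by a full Hall $\pi$-subgroup; from this I would try to produce a single $\pi$-element whose class size is divisible by a prime of $\pi$ and a prime of $\pi'$ at once, contradicting (a).

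For $(c)\Rightarrow(b)$ I would treat the two cases of (c) separately. If every nontrivial $\pi$-element class size is a $\pi$-number, then each $\pi$-element is centralised by a full Hall $\pi'$-subgroup; combining this with the conjugacy of Hall subgroups and an analysis of $\rad{\pi'}{G}$, I would show that a Hall $\pi'$-subgroup is normal and centralises every $\pi$-element, forcing $G=\rad{\pi}{G}\times\rad{\pi'}{G}$. If instead every such class size is a $\pi'$-number, then each $\pi$-element is centralised by a full Hall $\pi$-subgroup, so every $\pi$-element is $G$-conjugate into the centre of a fixed Hall $\pi$-subgroup $H$; the key lemma is that this fusion condition forces $H$ to be abelian. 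Abelian Hall $\pi$-subgroups of a $\pi$-separable group then have $\pi$-length at most one, by the $\pi$-analogue of the classical fact that a $p$-solvable group with abelian Sylow $p$-subgroups has $p$-length one, completing the second alternative of (b).

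I expect the main obstacle to be the no-mixing step $(a)\Rightarrow(c)$. The standard device for forcing a class size to be divisible by several primes---taking commuting elements $x,y$ of coprime order, so that $\ce{G}{xy}=\ce{G}{x}\cap\ce{G}{y}$ and both $|x^G|$ and $|y^G|$ divide $|(xy)^G|$---is not directly available here, because $a$ and $b$ are both $\pi$-elements: their product need not be a $\pi$-element and its centraliser need not factor. Circumventing this will likely demand either a careful choice of commuting $\pi$-elements inside a common Hall $\pi$-subgroup or a reduction to a section of $G$ in which the two incompatible behaviours can be isolated, and it is here that the full force of $\pi$-separability should be brought to bear.
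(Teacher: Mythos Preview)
The paper does not prove this theorem at all: it is quoted verbatim as \cite[Corollary~C]{FMS} and used as a black box in the proofs of Theorems~A and~B. There is therefore no ``paper's own proof'' against which to compare your proposal; any argument you supply is necessarily independent of this article.

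As for the proposal itself, it is a reasonable outline but not yet a proof. The implications $(c)\Rightarrow(a)$ and $(b)\Rightarrow(c)$ are fine. Your sketch of $(c)\Rightarrow(b)$ is plausible, though the first case (``every $\pi$-element class size is a $\pi$-number forces $G=\rad{\pi}{G}\times\rad{\pi'}{G}$'') needs more than what you wrote: having each $\pi$-element centralised by \emph{some} Hall $\pi'$-subgroup does not immediately give a normal complement, and you will want to invoke Wielandt's lemma (Lemma~\ref{wielandt} here) to place every $\pi$-element in $\rad{\pi}{G}$ and then argue that $[\rad{\pi}{G},\rad{\pi'}{G}]=1$. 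The genuine gap is exactly where you locate it: the no-mixing step $(a)\Rightarrow(c)$. You correctly identify why the usual $\ce{G}{xy}=\ce{G}{x}\cap\ce{G}{y}$ trick fails, but you do not propose a workaround, and your inductive set-up (passing to quotients) is incomplete because hypothesis~(a) is \emph{not} obviously inherited by normal subgroups, only by quotients, so a straight induction on $|G|$ via $\rad{\pi}{G}$ or $\rad{\pi'}{G}$ needs care. If you intend to supply a self-contained proof rather than cite \cite{FMS}, this implication is where the real work lies.
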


We close this preliminary section with the so-called Wielandt's lemma and a technical result based on \cite[Lemma 6]{BF3}.

\begin{lemma}[\text{\cite[Lemma 1]{BK}}]
\label{wielandt}
Let $G$ be a group, and let $H\in\hall{\pi}{G}$ for a set of primes $\pi$. If $x\in H$ and $|x^G|$ is a $\pi$-number, then $x\in\rad{\pi}{G}$.
\end{lemma}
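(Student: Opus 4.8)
The plan is to prove the equivalent assertion $x\in\rad{\pi}{G}$ (note that $x\in H\in\hall{\pi}{G}$ forces $x$ to be a $\pi$-element) by showing that the normal closure $N=\langle x^G\rangle$ is a $\pi$-group; since a normal $\pi$-subgroup containing $x$ must lie in $\rad{\pi}{G}$, this yields the claim. I would induct on $|G|$. The reduction is immediate: if $N\neq G$, then $H\cap N\in\hall{\pi}{N}$ contains $x$, and $|x^N|$ divides $|x^G|$ and so is again a $\pi$-number; the inductive hypothesis applied to $N$ places $x$ in $\rad{\pi}{N}$, which, being characteristic in the normal subgroup $N$, is contained in $\rad{\pi}{G}$. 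Hence I may assume $G=\langle x^G\rangle$.

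The arithmetic fact driving everything is that, since $|x^G|=|G:\ce{G}{x}|$ is a $\pi$-number, for each prime $q\in\pi'$ the subgroup $\ce{G}{x}$ contains a full Sylow $q$-subgroup of $G$; that is, $x$ centralizes some $Q\in\syl{q}{G}$. I would use this to dispose of the case $V:=\rad{\pi'}{G}\neq 1$. Passing to $G/V$, the image $\overline{x}$ is a $\pi$-element whose class size divides $|x^G|$, so induction puts $\overline{x}$ into $\rad{\pi}{G/V}=\rad{\pi',\pi}{G}/V$. At the same time the Sylow fact forces $\ce{V}{x}=V$: for each $q\in\pi'$ one has $V\cap Q\in\syl{q}{V}$ and $V\cap Q\leqslant\ce{V}{x}$, so $|\ce{V}{x}|_q=|V|_q$ for every $q$, and multiplying over $\pi(V)\subseteq\pi'$ yields $|\ce{V}{x}|=|V|$. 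Thus $x$ centralizes $V$, so inside the normal subgroup $C:=\ce{G}{V}\cap\rad{\pi',\pi}{G}$ the central subgroup $\ze{V}=C\cap V$ is a Hall $\pi'$-subgroup (indeed $C/\ze{V}$ embeds in the $\pi$-group $\rad{\pi',\pi}{G}/V$); splitting it off leaves a normal $\pi$-complement $U=\rad{\pi}{C}$, which is a normal $\pi$-subgroup of $G$ containing the $\pi$-element $x$, whence $x\in\rad{\pi}{G}$. This settles everything except the case $G=\langle x^G\rangle$ with $\rad{\pi'}{G}=1$.

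The hard part will be exactly this residual case, which is the genuine substance of Wielandt's lemma; here the coprime-centralization argument says nothing, and one must prove outright that $G$ is a $\pi$-group. I would try to strip off a minimal normal subgroup $V$: if $V$ is a $\pi$-group then $V\leqslant\rad{\pi}{G}$ and induction in $G/V$ closes the case, since the preimage of $\rad{\pi}{G/V}$ is again a $\pi$-group. The genuine obstruction is therefore a non-abelian minimal normal subgroup whose order is divisible by some prime of $\pi'$, and excluding it requires combining the generation hypothesis $G=\langle x^G\rangle$ with the fact that $x$ centralizes a Sylow $q$-subgroup for every $q\in\pi'$, through a careful count of how the conjugates of $x$ distribute among the Sylow $q$-subgroups and their normalizers $\no{G}{Q}$. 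Rather than reprove this classical theorem, I would invoke Wielandt's original argument to finish.
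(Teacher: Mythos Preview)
The paper does not give its own proof of this lemma: it is stated with the attribution \cite[Lemma 1]{BK} and used as a black box. So there is nothing to compare your argument against on the paper's side.

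As for your outline itself, the reductions you describe are sound. The observation that $H\cap N\in\hall{\pi}{N}$ for $N\unlhd G$ is correct (a standard order computation), and the treatment of the case $V:=\rad{\pi'}{G}\neq 1$ is clean: the centralization of $V$ by $x$ follows exactly as you say from the Sylow containment $Q\leqslant\ce{G}{x}$ for each $q\in\pi'$, and the decomposition $C=\ze{V}\times\rad{\pi}{C}$ with $C=\ce{G}{V}\cap\rad{\pi',\pi}{G}$ does produce a normal $\pi$-subgroup of $G$ containing $x$.

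The honest gap is precisely where you flag it: the residual case $G=\langle x^G\rangle$, $\rad{\pi'}{G}=1$, with a non-abelian minimal normal subgroup whose order meets $\pi'$. Saying you ``would invoke Wielandt's original argument'' here is, of course, circular if the goal is to \emph{prove} the lemma rather than merely to indicate where the content lies. Since the paper itself only cites the result, your write-up is not worse off than the paper in this respect; but as a self-contained proof it is incomplete, and the missing step---the counting/normalizer argument that forces $G$ to be a $\pi$-group in this last case---is the substantive part of the lemma.
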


\begin{lemma}
\label{pi-sep}
Suppose that $G$ is a $p$-separable group, for a given prime $p$. Let $q$ be a prime. If $|x^G|$ is not divisible by $q$ for every $\{p,q\}'$-element $x\in G$, then $G$ is $\{p,q\}$-separable.
\end{lemma}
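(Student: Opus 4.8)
The plan is to argue by induction on $|G|$, isolating $\{p,q\}$-separability as a condition on the composition factors of $G$. Since $G$ is $p$-separable, every non-abelian composition factor of $G$ is a $p'$-group; consequently $G$ is $\{p,q\}$-separable if and only if no non-abelian composition factor has order divisible by $q$ (an abelian factor is cyclic of prime order and hence automatically a $\{p,q\}$- or $\{p,q\}'$-group, while a non-abelian $p'$-factor $S$ is a $\{p,q\}'$-group exactly when $q\nmid|S|$). First I would record that both standing hypotheses pass to quotients: if $V\trianglelefteq G$, then $G/V$ is again $p$-separable, and for any $\{p,q\}'$-element $\bar{x}\in G/V$ I may lift it, by part (c) of the basic lemma opening this section, to a $\{p,q\}'$-element $y\in G$ with $\bar{x}=yV$; then $|\bar{x}^{G/V}|$ divides $|y^G|$ by part (b), and the hypothesis on $G$ gives $q\nmid|\bar{x}^{G/V}|$. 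Thus $G/V$ inherits both assumptions.

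For the inductive step I would choose a minimal normal subgroup $V$ of $G$. By the preceding remark and induction, $G/V$ is $\{p,q\}$-separable, so it suffices to prove that $V$ itself is $\{p,q\}$-separable; then concatenating normal series yields the result for $G$. Write $V=S^{k}$ with $S$ simple. Because $G$ is $p$-separable, $S$ is either cyclic of prime order or a non-abelian simple $p'$-group. If $S$ is abelian, then $V$ is an elementary abelian $r$-group for a single prime $r$, hence trivially $\{p,q\}$-separable. If $S$ is non-abelian, then $p\nmid|V|$, and $V$ is $\{p,q\}$-separable precisely when $q\nmid|S|$; so the whole problem reduces to ruling out the possibility $q\mid|S|$.

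The main obstacle lies exactly in this last reduction. Assume for contradiction that $q\mid|S|$, and restrict the class-size hypothesis to the normal subgroup $V$: since $V$ is a $p'$-group its $\{p,q\}'$-elements are precisely its $q$-regular elements, and $|x^V|$ divides $|x^G|$ for $x\in V$, so every $q$-regular class of $V$ has length coprime to $q$. Projecting onto a single factor—taking $x=(s,1,\dots,1)$ for a $q$-regular $s\in S$, so that $|x^V|=|s^S|$—this forces every $q$-regular class of the non-abelian simple group $S$ to have $q'$-length. I must therefore show this is impossible, i.e. that every non-abelian simple group of order divisible by $q$ possesses a $q$-regular conjugacy class whose length is divisible by $q$. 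I expect this to be the genuinely hard point: Theorem~\ref{clave} is unavailable here, since $S$ is not $q$-separable, so the step requires external input—either the classification of finite simple groups or a known class-size criterion for the existence of a normal $q$-complement (no non-abelian simple group has one), precisely along the lines of \cite[Lemma 6]{BF3}. Granting this fact, the hypothesis on $S$ collapses into a contradiction, whence $q\nmid|S|$, so $V$ is $\{p,q\}$-separable, and the induction closes.
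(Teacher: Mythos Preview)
Your argument is correct, and it relies on essentially the same external ingredient as the paper's proof, but the reduction is organised differently. The paper does not pass through a minimal normal subgroup at all: instead it notes that the hypotheses descend to both quotients \emph{and} normal subgroups, and then uses $p$-separability to peel off layers. If $G$ is a $p$-group there is nothing to prove; if $p\nmid|G|$ then the hypothesis reads ``every $q'$-element has $q'$-class size'', and \cite[Proposition~2]{BF1} gives $G=\rad{q}{G}\times\rad{q'}{G}$ outright; otherwise one of $\rad{p}{G}$, $\rad{p'}{G}$ is a proper nontrivial normal subgroup, and induction applied to it and to the corresponding quotient finishes. Your route instead drills down to a non-abelian simple $p'$-factor $S$ and isolates the needed fact in its sharpest form (a non-abelian simple group of order divisible by $q$ has a $q$-regular class of length divisible by $q$); the paper's route packages the same content as a single citation at the $p'$-group level. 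Neither approach avoids the deep input---\cite[Proposition~2]{BF1} and \cite[Lemma~6]{BF3} ultimately rest on the same class-size criterion you flag---but the paper's version is shorter and keeps the induction cleaner by not unpacking the minimal normal subgroup.
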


\begin{proof}
Observe that the hypotheses are inherited by quotients and normal subgroups of $G$. Let us argue by induction on $|G|$ to prove that $G$ is $q$-separable, and then the claim will follows directly. If $p\notin \pi(G)$, then each $q'$-element of $G$ has class length not divisible by $q$. In that case it is not difficult to show that $G=\rad{q}{G}\times\rad{q'}{G}$ (see for instance \cite[Proposition 2]{BF1}), so $G$ is $q$-separable. On the other hand, if $G$ is a $p$-group, then there is nothing to prove. Hence we may suppose that either $1<\rad{p}{G}<G$ or $1<\rad{p'}{G}<G$, and in both cases it follows by induction that $G$ is $q$-separable.
\end{proof}


\section{Proof of main results}

\begin{proof}[Proof of Theorem~\ref{teoA}]
The graph $\Gamma_p(G)$ is connected by hypothesis, so its diameter is at most 3 in virtue of \cite[Theorem 2]{BF1}. Let $B_0\in\conp$ be a non-central conjugacy class of maximal length. We aim to prove that $d(B_0,D)\leq 2$ for any non-central class $D\in\conp$. Arguing by contradiction, let us suppose that there exists a non-central conjugacy class $Y\in\conp$ such that $d(B_0,Y)=3$. Let $\pi_y=\pi(Y)$ and $\pi_0=\pi(B_0)$, so certainly $\pi_y\cap\pi_0=\emptyset$. By Proposition~\ref{prop} (a), we have that the subgroup $$S=\langle D \; : \; D\in\conp \; \text{is non-central and} \; d(B_0,D)\geq 2\rangle$$ is abelian, normal in $G$, and its order is not divisible by $p$. We proceed with a series of steps. We highlight that Steps 1 to 7 hold for any non-central $B_0=b^G\in\conp$ of maximal length, and by Proposition~\ref{prop} (d) they also hold for any non-central $Y=y^G\in\conp$ such that $d(B_0,Y)=3$.

\medskip

\noindent\textbf{\underline{Step 1.}} $G/\ze{G}$ is a $\pi_0\cup\pi_y\cup\{p\}$-group with $p\notin \pi_0\cup\pi_y$. 

\medskip

In virtue of Proposition~\ref{prop} (b) we have $\ze{G}_{p'}\leqslant S$ and $\pi(S/\ze{G}_{p'})\subseteq \pi_0$. Since $Y=y^G$ for certain $p'$-element $y\in G$, then by Proposition~\ref{prop} (c) we get that $\ce{G}{y}/S$ is a $\{p,q\}$-group, for some prime $q\in \pi_0$. Hence $|G:\ze{G}_{p'}|=|G:\ce{G}{y}|\cdot|\ce{G}{y}:S|\cdot|S:\ze{G}_{p'}|$ is a $\pi_y\cup\pi_0\cup\{p\}$-number, and in particular so is $|G:\ze{G}|$. Observe that $p\notin\pi_0\cup\pi_y$ by Proposition~\ref{prop} (d). 

\medskip

\noindent\textbf{\underline{Step 2.}} $G$ is $\pi_y$-separable, and there exists $T\in\hall{\pi_y}{G}$ abelian.

\medskip

We first claim that $G$ is $\pi_y$-separable. Take a $p'$-element $1\neq xS\in G/S$. We may assume that $x\notin S$ is a $p'$-element too, and so $d(x^G,B_0)\leq 1$ by the definition of $S$. Using Step 1, it follows that $|x^G|$ is a $\pi_0\cup \{p\}$-number, and so is $|(xS)^{G/S}|$. By \cite[Lemma 6]{BF3} we conclude that $G/S$ is a $\pi_0\cup\{p\}$-separable group and, since $S$ is abelian, then $G$ is $\pi_0\cup\{p\}$-separable. As $G/\ze{G}$ is a  $\pi_0\cup\pi_y\cup\{p\}$-group, then we may affirm that $G$ is $\pi_y$-separable.

Next we take any $\pi_y$-element $x\in G\smallsetminus \ze{G}_{p'}$. If $|x^G|$ is divisible by some prime in $\pi_y$, then necessarily $x\in S\smallsetminus \ze{G}_{p'}$, so $1\neq x\ze{G}_{p'}\in S/\ze{G}_{p'}$ which is a $\pi_0$-group by Proposition~\ref{prop} (b). But $o(x\ze{G}_{p'})$ divides $o(x)$, which is a $\pi_y$-number, so we get a contradiction. Thus $|x^G|$ is a $\pi_y'$-number for every $\pi_y$-element $x\in G$.  The existence of abelian Hall $\pi_y$-subgroups of $G$ now follows via Theorem~\ref{clave}.

\medskip

\noindent\textbf{\underline{Step 3.}} There exists a prime $q\in \pi_0$ such that $|y_q^G|=|Y|$, where $y_q\in S\smallsetminus\ze{G}_{p'}$ is the $q$-part of $y$.

\medskip

Since $Y$ is a generating class of $S$ by definition, then $y\in S\smallsetminus \ze{G}_{p'}$. Using the decomposition of $y$ as product of pairwise commuting elements of prime power order, we may certainly affirm that there exists a prime $q\in \pi(o(y))$, a $q$-element $y_q\in S$, and a $q'$-element $y_{q'}\in S$ such that $y=y_qy_{q'}=y_{q'}y_q$ with $y_q\notin \ze{G}$. Thus $1\neq |y_q^G|$ divides $|Y|$, and necessarily $d(y_q^G,B_0)=3$, so $|y_q^G|=|Y|$ by Proposition~\ref{prop} (d). In particular, as $y_q\in S\smallsetminus \ze{G}_{p'}$, Proposition~\ref{prop} (b) leads to $q\in \pi_0$. 

\medskip

\noindent\textbf{\underline{Step 4.}} $b\notin \ce{G}{y}$.

\medskip

By Step 3, it holds that $|y_q^G|=|Y|$ is a $\pi_y$-number, where $y_q$ is the $q$-part of $y$ for some prime $q\in \pi_0$. Note that $b=b_qb_{q'}=b_{q'}b_q$, so $|b_{q'}^G|$ divides $|B_0|$ and it is a $\pi_0$-number. By contradiction, let us suppose that $b\in\ce{G}{y}\leqslant\ce{G}{y_q}$, so $y_{q}\in\ce{G}{b}\leqslant\ce{G}{b_{q'}}$. It follows that $|(y_qb_{q'})^G|$ divides both $|y_q^G|$ and $|b_{q'}^G|$, which forces $b_{q'}\in\ze{G}$. Hence $|B|=|b_q^G|$, and there exists $T\in\hall{\pi_y}{G}$ such that $T\leqslant\ce{G}{b_q}$. Now $\ce{G}{b_qt}=\ce{G}{b_q}\cap\ce{G}{t}\leqslant \ce{G}{b_q}$ for every $t\in T$. Since $(b_qt)^G\in \conp$, and $|b_q^G|$ is of maximal length, then necessarily $\ce{G}{b_q}=\ce{G}{b_q}\cap\ce{G}{t}\leqslant\ce{G}{t}$, for every element $t\in T$. The assumption $yb=by$ yields that $y\in\ce{G}{b_q}\leqslant\ce{G}{t}$ for every element $t\in T$, so $T\leqslant \ce{G}{y}$, which is not possible.

\medskip

\noindent\textbf{\underline{Step 5.}} $|B_0|=|(b_{\pi_0} b_{\pi_y})^G|$, where $b_{\pi_0}\notin\ze{G}$ and $b_{\pi_y}\notin\ze{G}$ are the $\pi_0$-part and the $\pi_y$-part of $b$, respectively.

\medskip

Since $G/\ze{G}$ is a $\pi_0\cup\pi_y\cup\{p\}$-group by Step 1, then we may affirm that $|B_0|=|(b_{\pi_0} b_{\pi_y})^G|$. Certainly $b_{\pi_y}\notin\ze{G}$, since otherwise $|B_0|=|b_{\pi_0}^G|$ is of maximal length and therefore some conjugate of $b_{\pi_0}$ centralizes $y$, which contradicts Step 4. Next we aim to prove that $b_{\pi_0}\notin\ze{G}$. By contradiction, let us suppose the opposite, so $|B_0|=|b_{\pi_y}^G|$. Recall that $G$ is $\pi_y$-separable by Step 2, and $p$-separable by hypothesis. Using Step 1, we then have that $G$ is $\pi_0$-separable.

We claim that $G$ has abelian Hall $\pi_0$-subgroups. Let $z\in G$ be a $\pi_0$-element. Then $|z^G|$ is either a $\pi_0\cup\{p\}$-number or a $\pi_y\cup\{p\}$-number. In the first case, there exists $g\in G$ such that $b_{\pi_y}$ commutes with $z^g$. Hence $\ce{G}{b_{\pi_y}z^g}=\ce{G}{b_{\pi_y}}\cap\ce{G}{z^g}$, and by the maximality of $|b_{\pi_y}^G|$ we obtain $\ce{G}{b_{\pi_y}z^g}=\ce{G}{b_{\pi_y}}\leqslant\ce{G}{z^g}$. This implies that $|z^G|$ is not divisible by $p$, and thus it is a $\pi_0$-number. So the class size of each $\pi_0$-element of $G$ is either a $\pi_0$-number or a $\pi_y\cup\{p\}$-number. In virtue of Theorem~\ref{clave}, both cases cannot happen simultaneously, and by Step 3 we deduce that the second one actually occurs for all $\pi_0$-elements of $G$. Now the claim follows from Theorem~\ref{clave}.

Since $d(B_0,Y)=3$, there exist two vertices $A=a^G\in\conp$ and $C=c^G\in\conp$ that yield the following shortest path in the graph $\Gamma_p(G)$: $$B_0 \mbox{ --- }A\mbox{ --- }C\mbox{ --- }Y.$$ Note that $p$ is the unique prime that joins $A$ and $C$ in $\Gamma_p(G)$, by Step 1. Let us show that $a_{\pi_0}\in\ze{G}$ and so $|A|=|a_{\pi_y}^G|$, where $a_{\pi_0}$ and $a_{\pi_y}$ are the $\pi_0$-part and the $\pi_y$-part of $a$. Arguing by contradiction, if the $\pi_0$-part of $a$ is non-central in $G$, then $1\neq |a_{\pi_0}^G|$ is a $\pi_y\cup\{p\}$-number because the Hall $\pi_0$-subgroups of $G$ are abelian. But $|a_{\pi_0}^G|$ also divides the $\pi_0\cup\{p\}$-number $|A|$, so $|a_{\pi_0}^G|$ must be a non-trivial $p$-number. Now there exists some $g\in G$ such that $a_{\pi_0}^g\in\ce{G}{b_{\pi_y}}$. By the maximality of $|b_{\pi_y}^G|$, we deduce that $\ce{G}{b_{\pi_y}}\leqslant\ce{G}{a_{\pi_0}^g}$, which yields that $p$ divides $|b_{\pi_y}^G|=|B_0|$, a contradiction.

Next we claim that the $\pi_0$-elements of $\ce{G}{a}$ are central in $G$. Let $z\in\ce{G}{a}$ be a $\pi_0$-element. In particular, $z$ commutes with $a_{\pi_y}$, and as the $\pi_0$-part of $|a_{\pi_y}^G|=|A|$ is non-trivial, then necessarily $|z^G|$ is a $\pi_0\cup\{p\}$-number. In fact, $|z^G|$ is a $p$-number since the Hall $\pi_0$-subgroups of $G$ are abelian. Thus there exists $g\in G$ such that $z^g$ commutes with $b_{\pi_y}$. Since $|b_{\pi_y}^G|$ is of maximal size, arguing as above we get that $\ce{G}{b_{\pi_y}}\leqslant\ce{G}{z^g}$, so the $p$-number $|z^G|$ divides the $p'$-number $|b_{\pi_y}^G|$. This yields $|z^G|=1$. Hence $\ze{G}_{\pi_0}$, the Hall $\pi_0$-subgroup of $\ze{G}$, lies in $\hall{\pi_0}{\ce{G}{a}}$. An analogous reasonament leads to $\ze{G}_{\pi_0}\in\hall{\pi_0}{\ce{G}{b}}$. Therefore $$|A|_{\pi_0}=\dfrac{|G|_{\pi_0}}{|\ce{G}{a}|_{\pi_0}}=\dfrac{|G|_{\pi_0}}{|\ze{G}|_{\pi_0}}=\dfrac{|G|_{\pi_0}}{|\ce{G}{b}|_{\pi_0}}=|B_0|,$$ and since $p$ also divides $|A|$, then we get a contradiction with the maximality of $|B_0|$.

\medskip

\noindent\textbf{\underline{Step 6.}}  $b,y\in\rad{p'}{G}$ and $b_{\pi_0}\in\ze{\rad{p'}{G}}$.

\medskip

The first assertion directly follows from Step 1 and Lemma \ref{wielandt}. Set $N=\rad{p'}{G}$. Note that $b,y\notin \ze{N}$ by Step 4. Since $1\neq |b^N|$ and $1\neq |y^N|$ divide $|B_0|$ and $|Y|$, respectively, then in $\Gamma(N)$ we get that either $d(b^N, y^N)=3$ or $d(b^N,y^N)=\infty$. In the first case there exist two vertices $m^N$ and $n^N$ of $\Gamma(N)$ which yield a shortest path $b^N\mbox{ --- }m^N\mbox{ --- }n^N\mbox{ --- }y^N$. Hence there exists a prime $t\in \pi_0\cup\pi_y$ such that $t\in\pi(m^N)\cap\pi(n^N)\subseteq \pi(m^G)\cap\pi(n^G)$. This leads to a contradiction, since in this situation we would have $d(B_0,Y)\leq 2$ in $\Gamma_p(G)$. Hence $d(b^N,y^N)=\infty$, and then $\Gamma(N)$ is necessarily non-connected. It follows by Theorem~\ref{disconnected} that $N$ has two non-trivial class sizes, which must necessarily be $|y^N|$ and $|b^N|$. Let us suppose, arguing by contradiction, that $b_{\pi_0}\notin\ze{N}$. Since it is a $\pi_0$-element of $N$, $N$ has two non-trivial class sizes, and $N/\ze{N}$ is a Frobenius group with abelian inverse images in $N$ of the kernel and complements of $N/\ze{N}$, then we get that $|b_{\pi_0}^N|=|y^N|$. But $|b_{\pi_0}^N|$ divides $|B_0|$, which yields a contradiction.

\medskip

\noindent\textbf{\underline{Step 7.}}  $|b_{\pi_0}^G|=|(b_{\pi_0}y)^G|$.

\medskip

Recall that $Y$ and $b_{\pi_0}^G$ have coprime class lengths, so by Lemma~\ref{technical} (a) we get that $1\neq |(b_{\pi_0}y)^G|$ divides $|b_{\pi_0}^G|\cdot |Y|$. As this last product has prime divisors of both $\pi_0$ and $\pi_y$, then necessarily $|(b_{\pi_0}y)^G|$ must divide either $|Y|$ or $|b_{\pi_0}^G|$. In the first case we get $|(b_{\pi_0}y)^G|=|b_{\pi_0}^Gy^G|=|Y|>|b_{\pi_0}^G|$, and Lemma~\ref{technical} (b) yields $\langle b_{\pi_0}^G(b_{\pi_0}^G)^{-1}\rangle\leqslant \langle YY^{-1}\rangle$ and $|\langle b_{\pi_0}^G(b_{\pi_0}^G)^{-1}\rangle|$ divides $|Y|$. On the other hand, we can similarly argue with $Y$ and $b_{\pi_y}^G$, and we get again two cases. If $1\neq |(b_{\pi_y}y)^G|$ divides $|Y|$, then both class sizes must be equal, and so $b_{\pi_y}y\in S$. Since $y\in S$, then $b_{\pi_y}\in S\leqslant\ce{G}{y}$ because $S$ is abelian. Step 6 now yields $y\in\ce{G}{b_{\pi_0}}\cap\ce{G}{b_{\pi_y}}=\ce{G}{b}$, which is not possible by Step 4. Hence $|(b_{\pi_y}y)^G|=|b_{\pi_y}^G|>|Y|$ and, by Lemma~\ref{technical} (b), $|\langle YY^{-1}\rangle|$ divides $|b_{\pi_y}^G|$. It follows that $\langle b_{\pi_0}^G(b_{\pi_0}^G)^{-1}\rangle$ is both a $\pi_0$-group and a $\pi_y$-group, so $\langle b_{\pi_0}^G(b_{\pi_0}^G)^{-1}\rangle=1$ and $b_{\pi_0}\in\ze{G}$, which contradicts Step 5. Therefore we may conclude that $|(b_{\pi_0}y)^G|=|b_{\pi_0}^G|$, as desired.

\medskip

\noindent\textbf{\underline{Step 8.}} Final contradiction.

\medskip

Since $p\notin\pi_0\cup\pi_y$, there exists some $P\in\syl{p}{G}$ such that $P\leqslant\ce{G}{b}=\ce{G}{b_{\pi_0}}\cap\ce{G}{b_{\pi_y}}$, and we can choose a suitable $y\in G_{p'}$ with $d(B_0,y^G)=3$ such that $P\leqslant\ce{G}{y}$. Let $H\in\hall{p'}{\ce{G}{y}}$, so $\ce{G}{y}=PH$. Note that $G=T\ce{G}{y}=TPH$ for a suitable $T\in\hall{\pi_y}{G}$ such that $b_{\pi_y}\in T$, which is abelian by Step 2. If $z\in H\smallsetminus S$, then $|z^G|$ is a $\pi_0\cup\{p\}$-number by the definition of $S$, so $z\in\ce{H}{b_{\pi_y}^g}$ for some $g\in G=TPH$. We may then suppose that $g\in H$, and thus $$H=\displaystyle\bigcup_{g\in H} \left( \ce{H}{b_{\pi_y}}S\right) ^g.$$ We deduce that $H=\ce{H}{b_{\pi_y}}S$. Now by Step 7 there exists $g\in G$ such that $b_{\pi_y}^g\in \ce{G}{b_{\pi_0}y}$, and we may assume that $g\in S$. Clearly $b_{\pi_y}^g\in\ce{G}{b_{\pi_0}^g}=\ce{G}{b_{\pi_0}}$ because $b_{\pi_0}\in\ze{\rad{p'}{G}}\leqslant\ce{G}{S}$ by Step 6. It follows that $b_{\pi_y}^g\in \ce{G}{y}$, or equivalently $b_{\pi_y}\in \ce{G}{y^{g^{-1}}}=\ce{G}{y}$. This leads to $b=b_{\pi_0}b_{\pi_y}\in\ce{G}{y}$, which is not possible by Step 4.
\end{proof}

\medskip

Undoubtedly, the upper bound provided in Theorem~\ref{teoA} is best possible since in the ordinary graph $\Gamma(G)$ is known to be sharp. We can use this result to alternatively prove \cite[Theorem 2]{BF1}, that we restate below.

\begin{corollary}
\label{cor_diam}
Suppose that $G$ is a $p$-separable group. If $\Gamma_p(G)$ is connected, then its diameter is at most three.
\end{corollary}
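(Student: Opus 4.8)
The plan is to derive the bound from Theorem~\ref{teoA}, which furnishes a vertex of small eccentricity, and then to upgrade the resulting bound of four to three by a local analysis. Fix a non-central class $B_0\in\conp$ of maximal length (such a class exists whenever $\Gamma_p(G)$ has a vertex; if it has none the statement is vacuous). Since $\Gamma_p(G)$ is connected, Theorem~\ref{teoA} gives $d(B_0,D)\leq 2$ for every non-central $D\in\conp$. For arbitrary non-central $D_1,D_2\in\conp$ the triangle inequality then yields $d(D_1,D_2)\leq d(D_1,B_0)+d(B_0,D_2)\leq 4$. I would first dispose of the easy cases: if one of the two classes, say $D_1$, has maximal length, apply Theorem~\ref{teoA} with $B_0:=D_1$ to obtain $d(D_1,D_2)\leq 2$; and if $\min\{d(D_1,B_0),d(D_2,B_0)\}\leq 1$, the triangle inequality already gives $d(D_1,D_2)\leq 1+2=3$. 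Hence the only configuration left to exclude is $d(D_1,B_0)=d(D_2,B_0)=2$ together with $d(D_1,D_2)=4$, in which case $B_0$ is forced to be an interior vertex of a geodesic $D_1\mbox{ --- }A_1\mbox{ --- }B_0\mbox{ --- }A_2\mbox{ --- }D_2$.

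In that remaining configuration both $D_1=d_1^G$ and $D_2=d_2^G$ are, by definition, generating classes of the subgroup $S=\langle D : D\in\conp\text{ is non-central and }d(B_0,D)\geq 2\rangle$, which by Proposition~\ref{prop}(a) is an abelian normal $p'$-subgroup of $G$. Thus $d_1,d_2\in S$ are commuting $p$-regular elements, and $d(D_1,D_2)\geq 2$ forces $\gcd(|D_1|,|D_2|)=1$. The strategy is to manufacture a single non-central class adjacent to both $D_1$ and $D_2$, which would give $d(D_1,D_2)\leq 2$ and contradict the assumption. Decomposing $d_1$ and $d_2$ into prime-power parts, if one can choose primes $t_1\mid o(d_1)$ and $t_2\mid o(d_2)$ with $t_1\neq t_2$ so that the parts $x_1=(d_1)_{t_1}$ and $x_2=(d_2)_{t_2}$ are non-central with $\pi(x_1^G)\cap\pi(D_1)\neq\emptyset$ and $\pi(x_2^G)\cap\pi(D_2)\neq\emptyset$, then $x_1,x_2$ commute and have coprime orders, so the class size of each factor divides that of the product; consequently $(x_1x_2)^G$ has length divisible by a prime of $\pi(D_1)$ and by a prime of $\pi(D_2)$, hence is adjacent to both $D_1$ and $D_2$. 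More uniformly, one may instead invoke Lemma~\ref{technical}(a) to realise $D_1D_2=(d_1d_2)^G$ as a single non-central class of $\conp$ whose length is a nontrivial divisor of $|D_1||D_2|$, and then argue that this length must involve primes from both factors.

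The crux, and the step I expect to be the main obstacle, is exactly guaranteeing that the class produced above (be it $(x_1x_2)^G$ or the product class $D_1D_2$) has length divisible by primes coming from \emph{both} $\pi(D_1)$ and $\pi(D_2)$, rather than degenerating to a single side. This cannot be automatic, since common divisor graphs of diameter exactly three do occur; the argument must therefore genuinely exploit the standing hypothesis $d(D_1,D_2)=4$, equivalently the maximality of $|B_0|$ and the fact that $B_0$ lies on the geodesic. The delicate subcase is when $o(d_1)$ and $o(d_2)$ are powers of a single common prime $t$, so that no coprime-order selection of parts is available. Here $V=\langle d_1,d_2\rangle$ is an abelian $t$-subgroup of $S$, and one is forced to study the conjugation action of $G/\ce{G}{V}$ on $V$. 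The key leverage is that $D_1\neq D_2$ have coprime—hence distinct—lengths, so $d_1$ and $d_2$ are non-conjugate in $G$; using this one aims to locate an element $w\in V$ with $\ce{G}{w}=\ce{G}{d_1}\cap\ce{G}{d_2}$. For such a $w$ one computes $|w^G|=|G:\ce{G}{d_1}\cap\ce{G}{d_2}|=|D_1||D_2|$, which is divisible by primes of both $\pi(D_1)$ and $\pi(D_2)$, so $w^G$ is the sought common neighbour.

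Finally, I would combine this local analysis with the maximality of $|B_0|$, exactly as in Steps 4--8 of the proof of Theorem~\ref{teoA}, to rule out the intrusion of the prime $p$ into the relevant class sizes (so that the adjacencies produced live inside $\Gamma_p(G)$). The resulting common neighbour contradicts $d(D_1,D_2)=4$, leaving $d(D_1,D_2)\leq 3$ in every case, which is the assertion; the bound is best possible, as it already is for the ordinary graph $\Gamma(G)$.
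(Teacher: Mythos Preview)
Your reduction via Theorem~\ref{teoA} to the configuration $d(D_1,B_0)=d(D_2,B_0)=2$, $d(D_1,D_2)=4$ is correct and matches the paper. From that point on, however, your argument has a genuine gap: you never establish the existence of the promised common neighbour. In the delicate subcase where $d_1$ and $d_2$ are both $t$-elements you ``aim to locate an element $w\in V$ with $\ce{G}{w}=\ce{G}{d_1}\cap\ce{G}{d_2}$'', but you give no mechanism for producing such a $w$, and in general none exists: for an abelian $t$-group $V$ one only has $\ce{G}{w}\geq\ce{G}{V}=\ce{G}{d_1}\cap\ce{G}{d_2}$ for every $w\in V$, and equality can fail for every single element. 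The same obstruction afflicts the product-class approach: Lemma~\ref{technical}(a) tells you $|D_1D_2|$ divides $|D_1|\cdot|D_2|$, but nothing prevents $|D_1D_2|$ from equalling, say, $|D_1|$, in which case no prime of $\pi(D_2)$ is captured. Your closing appeal to ``Steps 4--8 of the proof of Theorem~\ref{teoA}'' is also misplaced: since $S$ is a $p'$-group, any $w\in S$ is automatically $p$-regular, so there is no ``intrusion of $p$'' to rule out; the real missing ingredient is the existence of $w$ itself.

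The paper avoids all of this by not seeking a common neighbour at all. Writing $A=D_1$, $B=D_2$ with $|A|>|B|$, it shows that the normal subgroup $\langle BB^{-1}\rangle$ has order dividing \emph{both} $|A|$ (directly from Lemma~\ref{technical}(b), since $d(A,B)\geq 3$) and $|B_0|$ (by first forming $B_0B$, which has size $|B_0|$ by maximality, and then $B_0BB^{-1}$, which must coincide with $B_0$, so $B_0\langle BB^{-1}\rangle=B_0$). Since $d(A,B_0)=2$ forces $\gcd(|A|,|B_0|)=1$, this gives $\langle BB^{-1}\rangle=1$, contradicting $B$ non-central. This double-divisibility trick using the maximal class $B_0$ is the idea you are missing; it replaces your unfinished search for a common neighbour with a short, self-contained contradiction.
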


\begin{proof}
By contradiction, let us suppose that there exist two non-central classes $A$ and $B$ in $\conp$ such that $d(A,B)=4$. By Theorem~\ref{teoA}, we necessarily deduce that $d(A,B_0)=2=d(B,B_0)$, where $B_0\in\conp$ is a non-central conjugacy class of maximal length. We may suppose, without loss of generality, that $|A|>|B|$, so $|\langle BB^{-1}\rangle|$ divides $|A|$ by Lemma~\ref{technical} (b). On the other hand, $B_0B\in\conp$ is non-central by Lemma~\ref{technical} (a), and thus $|B_0|=|B_0B|$. Since $B_0B$ and $B^{-1}$ are classes of coprime sizes, then $B_0BB^{-1}\in\conp$ again by Lemma~\ref{technical} (a). Note that $B_0\subseteq B_0BB^{-1}$, so it follows $B_0= B_0BB^{-1}$. Hence $B_0\langle B B^{-1}\rangle = B_0$ and therefore $|\langle BB^{-1}\rangle|$ divides $|B_0|$. But this is a contradiction since $A$ and $B_0$ have coprime lengths and $B$ is non-central.
\end{proof}

\medskip

\begin{proof}[Proof of Theorem~\ref{teoB}]
Let $G$ be a $p$-separable group. Consider that $d(x^G, y^G)=3$, where $x^G, y^G\in\conp$ are non-central. Set $\pi_x=\pi(x^G)$, $\pi_y=\pi(y^G)$ and $\pi=\pi_x\cup \pi_y$. Let $H$ be a $p$-complement of $G$ such that $x,y\in H$ up to conjugation. We will argue by induction on $|G|$ in order to show that, if $p$ divides $\min(|x^G|,|y^G|)$, then $G$ is $p$-nilpotent, $H=\rad{\pi}{H}\times\rad{\pi'}{H}$ with $\rad{\pi}{H}$ quasi-Frobenius with abelian kernel and complements, each complement of $H$ is centralised by some Sylow $p$-subgroup of $G$, and $\ze{H}=H\cap\ze{G}$.

Note that, in the decompositions of $x$ and $y$ as product of pairwise commuting elements of prime power order, we may suppose that each element is non-central: this is because if $x=x_1x_2$ with $x_1\notin\ze{G}$ and $x_2\in\ze{G}$, then $|x^G|=|(x_1x_2)^G|=|x_1^Gx_2|=|x_1^G|$ and $\pi(x^G)=\pi(x_1^G)$.

\medskip

\noindent\textbf{\underline{Step 1.}} If $xy=yx$, then $\pi(o(x))=\pi(o(y))=\{q\}$ for some prime $q\neq p$, and $q$ divides $\operatorname{max}(|x^G|,|y^G|)$. 

\medskip

Suppose first that $xy=yx$, and $x_{q},y_{r}\notin\ze{G}$ for two primes $q\neq r$ different from $p$. Since $|x_q^G|$ is a $\pi_x$-number, $|y_r^G|$ a $\pi_y$-number, and $x_qy_r=y_rx_q$, then $(x_qy_r)^G\in\conp$ and it has length divisible by primes lying in $\pi_x$ and in $\pi_y$. This implies the contradiction $d(x^G,y^G)\leq 2$.

Next we show that $q\in\operatorname{max}(|x^G|,|y^G|)$. We may certainly assume $|x^G|>|y^G|$, so by Lemma~\ref{technical} we get that $\langle y^G(y^G)^{-1}\rangle\leqslant \langle x^G(x^G)^{-1}\rangle$ and $|\langle y^G(y^G)^{-1}\rangle|$ divides $|x^G|$. But $x^G\subseteq \ce{G}{y}$ because $G=\ce{G}{x}\ce{G}{y}$, so $\langle y^G(y^G)^{-1}\rangle\leqslant\langle x^G(x^G)^{-1}\rangle\leqslant\ce{G}{y}$. In particular, taking some $g\in G$ such that $z=y^g\neq y$, we obtain $1\neq w=zy^{-1}\in \langle y^G(y^G)^{-1}\rangle\leqslant\ce{G}{y}$. Hence $z\in\ce{G}{y^{-1}}$ and therefore $w$ is a non-trivial $q$-element lying in $\langle y^G(y^G)^{-1}\rangle$. Since we already know that the order of this subgroup divides $|x^G|$, we have then proved that $q\in\pi_x$.

\medskip

\noindent\textbf{\underline{Step 2.}} If $xy\neq yx$, then either $\pi(o(x))\subseteq \pi_y$ and $\pi(o(y))\subseteq \pi_x$, or $x=x_qx_{q'}$ and $y=y_q$ with $q\in\pi_x$ and $\emptyset\neq \pi(o(x_{q'}))\subseteq \pi_y$.

\medskip

Note that $\pi(o(y))\cap\pi_x\neq \emptyset$, since otherwise, as $|x^G|=|\ce{G}{y}:\ce{G}{x}\cap\ce{G}{y}|$ is a $\pi_x$-number and $y\in\ze{\ce{G}{y}}$, then $xy=yx$ which cannot occur. Hence there exists a prime $q\in\pi(o(y))\cap\pi_x$. Analogously, there exists a prime $r\in\pi(o(x))\cap \pi_y$. Indeed, we can similarly deduce that $\pi(o(y))\smallsetminus\{r\}\subseteq\pi_x$, and analogously $\pi(o(x))\smallsetminus\{q\}\subseteq\pi_y$. It follows that $\pi(o(x))\cap\pi(o(y))\subseteq \{q,r\}$, because $\pi_x\cap\pi_y=\emptyset$.

Now if $\pi(o(x))\cap\pi(o(y))=\emptyset$, then $\pi(o(x))\subseteq \pi_y$ and $\pi(o(y))\subseteq \pi_x$. If $\pi(o(x))\cap\pi(o(y))=\{q\}$, then as $q\in\pi_x$ it follows that $q\notin\pi_y$, and so there exists a conjugate of $x_q$ that commutes with $y$. But $G=\ce{G}{x_q}\ce{G}{y}$ since they have coprime class sizes, so $x_qy=yx_q$. Consequently $y_{q'}\in\ze{G}$, and $y$ can be assumed to be a $q$-element, with $q\in\pi_x$ and $\pi(o(x_{q'}))\subseteq \pi_y$. In this situation observe that $\pi(o(x_{q'})\neq \emptyset$: otherwise both $x$ and $y$ are $q$-elements, and as $q\notin \pi_y$, then it can be easily proved that $x$ commutes with $y$, which is not possible. Finally, if $\pi(o(x))\cap\pi(o(y))=\{q,r\}$, then we can similarly deduce that $y$ can be chosen to be a $q$-element, which contradicts the fact $r\in\pi(o(y))$.

We remark that, as a consequence of the previous two steps, both $x$ and $y$ are $\pi$-elements hereafter.

\medskip

\noindent\textbf{\underline{Step 3.}} $G/\ze{G}$ is not a $\pi$-group.

\medskip

The opposite would imply that in a path of length three $x^G\mbox{ --- }a^G\mbox{ --- }c^G\mbox{ --- }y^G$ the unique prime that joins $a^G$ and $c^G$ could be $p$. Since $p\in \pi=\pi_x\cup \pi_y$ by hypotheses, then necessarily $d(x^G,y^G)\leq 2$ which is against our assumptions.

\medskip

\noindent\textbf{\underline{Step 4.}} $G$ is $\pi$-separable.

\medskip

Let us denote $K=\ce{G}{x}\cap\ce{G}{y}$ and $\sigma=\pi(o(x))\cup\pi(o(y))\cup\{p\}\subseteq\pi$. We claim that each $\sigma'$-element of $K$ has class size in $G$ a $\pi'$-number. Let $z\in K$ be a $\sigma'$-element; note that $z$ must exist, since otherwise $K$ would be a $\sigma$-group, and as $|G:K|=|x^G|\cdot|y^G|$ it would follow that $|G|=|G:K|\cdot|K|$ is a $\pi$-number, which is not possible due to Step 3. Consequently $z$ commutes with both $x$ and $y$, and it has coprime order with them, so $|z^G|$ divides both $|(zx)^G|$ and $|(zy)^G|$. But these two numbers are divisible by $|x^G|$ and $|y^G|$, respectively, which leads to $\pi\cap\pi(z^G)=\emptyset$ due to the assumption $d(x^G,y^G)=3$.

Set $T=\pmb{\operatorname{O}}^{\sigma}(K)=\langle g\in K \: : \: \pi(o(g))\subseteq\sigma'\rangle$. We have shown above that $T\neq 1$. Note also that $T\unlhd G$, since for every generator $g\in T$ it holds $(|G:K|,|g^G|)=1$, so $G=K\ce{G}{g}$ and $g^G=g^K\subseteq T$.

Let us show that $T$ is $\sigma$-separable. If $xy=yx$, then Step 1 implies that $\sigma=\{p,q\}$ for some prime $q\neq p$. Thus every $\sigma'$-element $\alpha\in T\leqslant K$ verifies that $|\alpha^G|$ is a $\pi'$-number, and in particular $|\alpha^T|$ is not divisible by $q$. Now Lemma~\ref{pi-sep} yields the claim. 

On the other hand, if $xy\neq yx$, then by Step 2 we need to discuss two situations: either $x$ and $y$ have coprime orders or they have not. In the former case, $\pi(o(x))\subseteq\pi_y$ and $\pi(o(y))\subseteq \pi_x$ by Step 2. Take any prime $s\in\pi(o(x))\cup\pi(o(y))$ and any $\{p,s\}'$-element $\alpha\in T\leqslant K$. If $s\in\pi(o(x))$, then $x_s$ and $\alpha$ have coprime orders, and $x_s\alpha=\alpha x_s$, which forces $|(x_s\alpha)^G|$ to be a $\pi_y'$-number. As $\pi(o(x))\subseteq \pi_y$, then $|\alpha^G|$ is not divisible by $s$, and $|\alpha^T|$ is not either. Utilising the same argument, one can show that the conclusion also holds if $s\in\pi(o(y))$. Therefore, $|\alpha^T|$ is not divisible by $s$ for every $\{p,s\}'$-element $\alpha\in T\leqslant K$, and for every prime $s\in\sigma$ different from $p$. In virtue of Lemma~\ref{pi-sep} we get the $\sigma$-separability of $T$. 

Finally, in the last case where $x$ and $y$ do not have coprime orders, by Step 2 we get $\pi(o(y))=\{q\}\subseteq \pi_x$ and $x=x_qx_{q'}$ with $\emptyset\neq\pi(o(x_{q'}))\subseteq \pi_y$. It is possible to similarly deduce that every $\{p,q\}'$-element $\alpha\in T$ has $|\alpha^T|$ not divisible by $q$, and that for each prime $s\in\pi(o(x_{q'}))$ and for every $\{p,s\}'$-element $\alpha\in T$ its class size in $T$ is not divisible by $s$. Again Lemma~\ref{pi-sep} yields the desired claim.

Hence, we have deduced in all cases that $T$ is $\sigma$-separable, and so it is $\sigma'$-separable. Recall that, by the above paragraphs, every $\sigma'$-element of $T$ has class size in $T$ a $\sigma'$-number. Theorem~\ref{clave} leads to $T=\rad{\sigma}{T}\times\rad{\sigma'}{T}$, and actually $T=\rad{\sigma'}{T}$ since it is generated by $\sigma'$-elements by definition. In particular every element of $T$ has class size not divisible by $s$, for every prime $s\in \pi$, so each Sylow $s$-subgroup of $T$ is central and $T=\rad{\pi}{T}\times \rad{\pi'}{T}$ with $\rad{\pi}{T}$ abelian.

Observe that $\rad{\pi'}{T}\in\hall{\pi'}{G}$ since $|G:T|=|G:K|\cdot|K:T|$, which is the product of a $\pi$-number and a $\sigma$-number, respectively, with $\sigma\subseteq\pi$. In particular $G$ is $\pi$-separable.

\medskip

\noindent\textbf{\underline{Step 5.}} If $B_0\in\conp$ is a class of maximal cardinality, and $\operatorname{min}(|x^G|,|y^G|)=|y^G|$, then $d(B_0,y^G)=2$. 

\medskip

Let us suppose the opposite, so $d(x^G,B_0)=2$ by hypotheses and Theorem~\ref{teoA}. Set $X=x^G$ and $Y=y^G$. Applying Lemma~\ref{technical}, we get $XB_0\in\conp$, and necessarily $|XB_0|=|B_0|$. Hence $(|X^{-1}|, |XB_0|)=1$, so again Lemma~\ref{technical} yields $X^{-1}XB_0\in\conp$. Since $B_0\subseteq X^{-1}XB_0$, then by maximality $X^{-1}XB_0=B_0$. It follows $\langle X^{-1}X\rangle B_0=B_0$, which implies that $|\langle X^{-1}X\rangle|$ divides $|B_0|$. Moreover, since $|X|>|Y|$ and $d(X,Y)=3$, in virtue of Lemma~\ref{technical} we get $\langle Y^{-1}Y\rangle\leqslant\langle X^{-1}X\rangle$ and $1\neq |\langle Y^{-1}Y\rangle|$ divides $|X|$, which is a $\pi_x$-number. Thus we have obtained the contradiction $(|B_0|, |X|)\neq 1$.

\medskip

\noindent\textbf{\underline{Step 6.}} The conclusion.

\medskip

Note that every $\pi'$-element $g\in G$ lies in $\rad{\pi'}{T}\leqslant K$, and so $|g^G|$ is a $\pi'$-number by the first paragraph of Step 4. Since we have proved that $G$ is $\pi$-separable (and so $\pi'$-separable), in virtue of Theorem~\ref{clave} we deduce that $G=\rad{\pi}{G}\times \rad{\pi'}{G}$, and thus $H=\rad{\pi}{H}\times \rad{\pi'}{H}$. Now $x^G=x^{\rad{\pi}{G}}$ and $y^G=y^{\rad{\pi}{G}}$, so $p\in\pi(x^{\rad{\pi}{G}})\cup \pi(y^{\rad{\pi}{G}})$. Clearly $d(x^{\rad{\pi}{G}}, y^{\rad{\pi}{G}})\geq 3$ because otherwise, since $\rad{\pi}{G}$ is normal in $G$, then we may induce in $\Gamma_p(G)$ a path of length strictly smaller than 3 from $\Gamma_p(\rad{\pi}{G})$, and this is not possible. 

Now we distinguish two cases. If $d(x^{\rad{\pi}{G}}, y^{\rad{\pi}{G}})= 3$, as $\rad{\pi}{G}<G$ by Step 3, then we may apply induction to this normal subgroup to deduce that $\rad{\pi}{H}\in\hall{p'}{\rad{\pi}{G}}$ is a quasi-Frobenius group with abelian kernel and complements, $\rad{\pi}{H}$ is normal in $\rad{\pi}{G}$, $\ze{\rad{\pi}{H}}=\rad{\pi}{H}\cap\ze{\rad{\pi}{G}}$, and each complement of $\rad{\pi}{H}$ is centralised by some Sylow $p$-subgroup of $\rad{\pi}{G}$. Since $|G:\rad{\pi}{G}|$ is a $p'$-number, we get the desired conclusion.

Finally, let us suppose that $d(x^{\rad{\pi}{G}}, y^{\rad{\pi}{G}})= \infty$, so $\Gamma_p(\rad{\pi}{G})$ has two complete connected components by \cite[Theorem 3]{BF1}. Clearly, a class $B_0\in\conp$ of maximal length can be decomposed as $B_0=B_1B_2$, where $B_1\in\text{Con}(\rad{\pi}{G}_{p'})$ and $B_2\in\text{Con}(\rad{\pi'}{G})$ are of maximal length, respectively. In particular, $|B_0|=|B_1|\cdot |B_2|$. Let us suppose w.l.o.g. that $\operatorname{min}(|x^G|,|y^G|)=|y^G|$. Now Step 5 leads to $d(B_0, y^G)=2$, and thus $d(B_1, y^{\rad{\pi}{G}})=\infty$. Hence $y^{\rad{\pi}{G}}$ and $B_1$ belong to different connected components of $\Gamma_p(\rad{\pi}{G})$. Since $y^G=y^{\rad{\pi}{G}}$ and $p\in\pi(y^G)$ by hypotheses, then it is enough to apply Theorem~\ref{disconnected-p-reg} (a) to $\rad{\pi}{G}$ in order to finish the proof.
\end{proof}

\medskip

\begin{example}
\label{example}
It is not difficult to obtain groups that satisfy the hypotheses of Theorem~\ref{teoB}.  In fact, it is enough to consider a $p'$-group $H=\rad{\pi}{H}\times\rad{\pi'}{H}$ with $\rad{\pi}{H}$ quasi-Frobenius with abelian kernel and complements, and then a $p$-group $P$ that only acts on the kernel of $\rad{\pi}{H}$. 

For instance, take $p=2$ and $G$ the direct product of a cyclic group of order $7$ acted frobeniusly by a cyclic group of order $6$, and an extraspecial group of order $5^3$ and exponent $5$, i.e. $G\cong (C_7 \rtimes C_6) \times ((C_5\times C_5)\rtimes C_5)$. There are classes $x^G,y^G\in\conp$ with sizes $6$ and $7$, respectively, and $d(x^G,y^G)=3$ in $\Gamma_p(G)$ with $p=2$ dividing $\min(|x^G|,|y^G|)$. 

On the other hand, we highlight that $G$ may not be $p$-nilpotent when $p$ divides $\max(|x^G|,|y^G|)$. This is the case, for example, of a direct product $G$ of a dihedral group of order $42$ and an extraspecial group of order $5^3$ and exponent $5$, with $p=3$. Here there are $p$-regular classes with lengths $2$ and $21$, and their distance in $\Gamma_p(G)$ is $3$.

Finally observe that, by considering the action of $P$ on $\rad{\pi'}{H}$ (or vice versa), one can build groups such that $d(x^G,y^G)=3$ in $\Gamma_p(G)$ for certain classes $x^G,y^G\in\conp$ with $p\notin \pi(x^G)\cup\pi(y^G)$.
\end{example}

\medskip

\begin{remark}
\label{remark}
Let $G$ be a $p$-separable group such that $\Gamma_p(G)$ is non-connected. Let $B_0\in\conp$ be a non-central class of maximal cardinality, and let $\pi_0$ be the set of prime divisors of the lengths of all conjugacy classes lying in the same connected component as $B_0$. As stated in Theorem~\ref{disconnected-p-reg}, it is known that $H$ is quasi-Frobenius with abelian kernel and complements provided that $p\notin \pi_0$. However, the situation $p\in\pi_0$ is not yet entirely solved, since the case where $\pi_0=\{p,q\}$ and a Sylow $q$-subgroup of $G$ is non-abelian is still open.

The previous issue is closely related to the behaviour of $\Gamma_p(G)$ when it has diameter three and $p$ divides $\operatorname{max}(|x^G|,|y^G|)$, since the proof of Theorem~\ref{teoB} can be reduced to the case that $\Gamma_p(G)$ is non-connected with $p\in\pi_0$. Therefore, a positive answer to the open problem mentioned in the above paragraph directly provides a proof of Conjecture~\ref{conC} in the general case $p\in\pi(x^G)\cup \pi(y^G)$.
\end{remark}


\end{document}